\newif\ifpreprint
\newif\ifipco
\newif\ifdoubleblind
\newif\ifsiam

\preprinttrue 

\ifpreprint
\documentclass[12pt]{article}

\emergencystretch 3em

\usepackage{fullpage}

\usepackage{amsthm}
\usepackage{multirow}

\usepackage{csvsimple}	\usepackage{booktabs}   \AtBeginEnvironment{tabular}{\small}
\usepackage{adjustbox}  

\makeatletter
\renewcommand{\paragraph}[1]{\textbf{#1.}\;}
\makeatother

\fi

\ifipco
\documentclass[runningheads]{llncs}
\usepackage[T1]{fontenc}

\setlength{\arraycolsep}{3pt} \linespread{0.93} 

\fi 

\ifsiam
\documentclass[final,hidelinks,onefignum,onetabnum]{siamart250106}
\usepackage{csvsimple}	\usepackage{booktabs}   \usepackage{adjustbox}  

\bibliographystyle{siamplain}

\headers{Verification of First-Order Methods via MILP}{V. Ranjan, J. Park, S. Gualandi, A. Lodi, and B. Stellato}

\author{Vinit Ranjan\thanks{Department of Operations Research and Financial Engineering, Princeton University, New Jersey, USA
  (\email{vranjan@princeton.edu}, \email{jisunpark@princeton.edu},
  \email{bstellato@princeton.edu}).\\
  V. Ranjan and B. Stellato are supported by the NSF CAREER Award ECCS-2239771. J. Park (affiliated with the Research Institute of Mathematics, Seoul National University) is supported by the National Research Foundation of Korea (NRF) grant funded by the Korea government (MSIT) (RS-2024-00353014). B. Stellato and J. Park are also supported by the ONR YIP Award N000142512147.}
  \and Jisun Park\footnotemark[2]
  \and Stefano Gualandi\thanks{Department of Mathematics, University of Pavia, Pavia, Italy (\email{stefano.gualandi@unipv.it}).\\
  S. Gualandi acknowledges the contribution of the National Recovery and Resilience Plan, Mission 4 Component 2 - Investment 1.4 - National Center for HPC, Big Data and Quantum Computing (project code: CN\_00000013) - funded by the European Union - NextGenerationEU.}
  \and Andrea Lodi\thanks{Jacobs Technion-Cornell Institute, Cornell Tech, New York, USA (\email{andrea.lodi@cornell.edu}).}
  \and Bartolomeo Stellato\footnotemark[2]
}
\fi 

\usepackage{graphicx}
\usepackage{pgfplots}
\pgfplotsset{compat=1.17} \usepackage{tikz}
\usepackage{xcolor}
\definecolor{myblue}{RGB}{72, 145, 220}
\usepackage[font=small,
skip=5pt,
belowskip=0pt
]{caption} 

\usepackage{bm}  \usepackage{stmaryrd}
\usepackage{amsmath,amssymb,enumitem,mathtools}

\ifsiam
\else
\usepackage[linesnumbered,ruled,vlined]{algorithm2e}

\usepackage[colorlinks=true, citecolor=cyan, linkcolor=cyan, urlcolor=cyan]{hyperref}
\fi

\newcommand{\bnote}[1]{}
\newcommand{\vnote}[1]{}
\newcommand{\snote}[1]{}
\newcommand{\anote}[1]{}
\newcommand{\jnote}[1]{}

\newcommand{\eg}{{\it e.g.}}
\newcommand{\ie}{{\it i.e.}}

\newcommand{\ubar}[1]{\text{\b{$#1$}}}

\newcommand{\ones}{\mathbf 1}
\newcommand{\reals}{{\mbox{\bf R}}}

\newcommand{\symm}{{\mbox{\bf S}}}

\newcommand{\tpose}{T}

\newcommand{\diag}{\mathop{\bf diag}}
\newcommand{\prox}{{\bf prox}}
\newcommand{\conv}{{\bf conv}}

\newcommand{\argmin}{\mathop{\rm argmin}}

\newcommand{\dom}{\mathop{\bf dom}} 
\newcommand{\ReLU}{\mathop{\bf ReLU}}

\newcommand{\normalcone}{{\mathop{N}}}

\ifpreprint
\newtheorem{theorem}{Theorem}[section]  \newtheorem{lemma}{Lemma}[section]    \newtheorem{proposition}{Proposition}[section]

\theoremstyle{definition}

\theoremstyle{remark}

\newtheorem{remark}{Remark}[section]

\fi
 
\ifpreprint
\bibliographystyle{abbrv}
\fi 
\ifipco
\bibliographystyle{splncs04}
\fi

\ifpreprint
\title{Exact Verification of First-Order Methods\\ via Mixed-Integer Linear Programming}
\fi

\ifsiam
\title{Exact Verification of First-Order Methods\\ via Mixed-Integer Linear Programming\thanks{Submitted to the editors April 5, 2025.}}
\fi

\ifipco
\titlerunning{Exact Verification of First-Order Methods using MILP}

\ifdoubleblind
\author{}
\authorrunning{}
\institute{}

\else

\author{Vinit Ranjan\inst{1} \and Jisun Park\inst{1} \and Stefano Gualandi\inst{2} \and Andrea Lodi\inst{3} \and Bartolomeo Stellato\inst{1}}
\authorrunning{V.\ Ranjan et al.}

\institute{Princeton University, Princeton, NJ 08544, USA \email{\{vranjan, jisunpark, bstellato\}@princeton.edu} \and
University of Pavia, 27100 Pavia, Italy\\\email{stefano.gualandi@unipv.it}\\
\and Cornell Tech, New York, NY 10044, USA\\
\email{andrea.lodi@cornell.edu}}
\fi

\fi

\ifpreprint
\author{Vinit Ranjan, Jisun Park, Stefano Gualandi, Andrea Lodi, and Bartolomeo Stellato}

\fi 

\begin{document}

\maketitle

\begin{abstract}
We present exact mixed-integer linear programming formulations for verifying the performance of first-order methods for parametric quadratic optimization.
We formulate the verification problem as a mixed-integer linear program where the objective is to maximize the infinity norm of the fixed-point residual after a given number of iterations. 
Our approach captures a wide range of gradient, projection, proximal iterations through affine or piecewise affine constraints.
We derive tight polyhedral convex hull formulations of the constraints representing the algorithm iterations. 
To improve the scalability, we develop a custom bound tightening technique combining interval propagation, operator theory, and optimization-based bound tightening.
Numerical examples, including linear and quadratic programs from network optimization, sparse coding using Lasso, and optimal control, show that our method provides several orders of magnitude reductions in the worst-case fixed-point residuals, closely matching the true worst-case performance.
\ifipco
\keywords{Performance verification  \and First-order methods \and Mixed-integer programming \and Linear programming \and Quadratic programming.}
\fi
\end{abstract}

\ifsiam
\begin{keywords}
Performance verification, First-order methods, Mixed-integer programming, Linear programming, Quadratic programming.
\end{keywords}

\begin{MSCcodes}
    90C11, 90C20, 65K05.
\end{MSCcodes}
\fi

\section{Introduction}\label{sec:intro}
Convex Quadratic Programming~(QP) is a fundamental class of optimization problems in applied mathematics, operations research, engineering, and computer science~\cite{boydConvexOptimization2004} with several applications from finance~\cite{boydMultiPeriodTradingConvex2017,markowitz} and machine learning~\cite{candes2007lasso,svm,huber1964huber,lasso}, to signal processing~\cite{chenAtomicDecompositionBasis2001,sig_proc} and model predictive control~\cite{borrelliPredictiveControlLinear2017}. 
A particularly important subclass of QP is Linear Programming~(LP)~\cite{dantzigLinearProgrammingExtensions1998}, which is widely used in scheduling~\cite{hanssmannLinearProgrammingApproach1960}, network optimization~\cite{bertsekas1991linear}, chip design~\cite{kimLinearProgrammingbasedAlgorithm2003}, and resource allocation~\cite{blandAllocationResourcesLinear1981}. 
In many such applications, solving the optimization problem is part of a wider system, imposing strict computational requirements on the algorithm used.
In particular, these algorithms consist of repeated iterations, generating a sequence of points that converge to an optimal solution, if the problem is solvable.
Depending on the operational speed of each application, the time available to solve each optimization problem may vary from minutes to milliseconds, which, in turn, sets an exact limit on the number of allowed iterations.
Therefore, to safely deploy optimization algorithms, we must ensure that they \emph{always converge} within the prescribed iteration limit, \ie, in real-time.

First-order optimization algorithms have attracted an increasing interest over the last decade due to their low per-iteration cost and memory requirements, and their warm-start capabilities.
These properties are particularly useful, on the one hand, in large-scale optimization problems arising in data-science and machine learning, and, on the other hand, in embedded optimization problems arising in engineering and optimal control.
First-order methods date back to the 1950s and were first applied to QPs for the development of the Frank-Wolfe algorithm~\cite{MargueriteWolfe1956}.
Recently, several general-purpose solvers based on first-order methods have appeared, including PDLP~\cite{applegatePracticalLargeScaleLinear2021,applegateFasterFirstorderPrimaldual2023} for LP, OSQP~\cite{stellatoOSQPOperatorSplitting2020} and QPALM~\cite{hermansQPALMProximalAugmented2022} for QP, and SCS~\cite{odonoghue:21,ocpb:16} and COSMO~\cite{garstkaCOSMOConicOperator2021} for conic optimization.
These methods feature several algorithmic enhancements that make them generally reliable, including acceleration~\cite{odonoghue:21}, and restarts~\cite{applegatePracticalLargeScaleLinear2021,applegateFasterFirstorderPrimaldual2023}, which guarantee worst-case linear convergence rates.
However, their exact non-asymptotic empirical behavior for a fixed budget of iterations is not clearly understood.
In particular, their practical convergence behavior can be very different, and sometimes significantly better, than the worst-case bounds that do not account for any application-specific problem structure and warm-starts used.

In this paper, we introduce an exact and scalable Mixed-Integer Linear Programming (MILP) formulation of the problem of analyzing the performance of first-order methods in LP and QP.

\subsection{The problem}
We are interested in solving QPs of the form
\begin{equation}\label{eq:prob}
    \begin{array}{ll}
        \text{minimize} & (1/2)z^TPz + q(x)^Tz\\
        \text{subject to} & Az + r = b(x)\\
        & r \in C_1 \times C_2 \times \dots \times C_L,
    \end{array}
\end{equation}
with variables $z \in \reals^n$ and $r \in \reals^m$, and instance parameters $x \in X \subseteq \reals^p$, where $X$ is a polyhedron.
The sets $C_i \subseteq \reals^{n_i}$ come from a selected list:  the zero set $\{0\}^{n_i}$ to model equality constraints, the nonnegative orthant $\reals_+^{n_i}$ to model one-sided inequality constraints, and the box $[\ell, u] \subseteq \reals^{n_i}$ to model double-sided inequality constraints.
We denote the objective as $f(z, x) = (1/2)z^TPz + q(x)^Tz$ with positive semidefinite matrix~$P \in \symm_{+}^n$ and affine function $q(x) \in \reals^n$.
The constraints are defined by matrix $A \in \reals^{m \times n}$, and affine function $b(x) \in \reals^m$.
If $P=0$, problem~\eqref{eq:prob} becomes an LP.
We focus on settings where we solve several instances of problem~\eqref{eq:prob} with varying instance parameters $x$. We assume problem~\eqref{eq:prob} to be solvable for any $x \in X$ and we denote the optimal solutions as~$z^\star(x)$.
Problems where $q(x)$ and $b(x)$ change arise in a wide range of settings, including sparse coding using Lasso~\cite{lasso} ($x$ being the input signal), model predictive control~\cite{borrelliPredictiveControlLinear2017} ($x$ being the state of a linear dynamical system), and network flow problems~\cite[Ch.\ 1]{bertsekas1991linear} ($x$ being the node supplies).
Our goal is to solve problem~\eqref{eq:prob} using \emph{first-order methods} of the form
\begin{equation}
s^{k+1} = T(s^k, x),\quad k=0,1,\dots,
\end{equation}
where $T$ is a \emph{fixed-point operator} that maps the (primal-dual) iterate $s^k \in \reals^d$ to $s^{k+1} \in \reals^d$ depending on the instance parameter $x$.
For any convergent first-order method and instance $x$, $T$ can be constructed such that the fixed-points of $T$ correspond to the optimal solution of the given instance.
We assume the operator to be $L$-Lipschitz in its first argument with $L>0$ and, therefore, single-valued~\cite[Sec.\ 2.1]{ryuLargeScaleConvexOptimization2022}.
Operators of this form encode a wide range of algorithms, including gradient descent, proximal gradient descent, and operator splitting algorithms~\cite{ryuLargeScaleConvexOptimization2022}. We denote the initial iterate set as the polyhedron $S \subset \reals^d$.
For cold-started algorithms, we have~${S = \{0\}}$.
For a given $x$, \emph{fixed-points} $s^\star = T(s^\star, x)$ correspond to the (primal-dual) optimal solutions of problem~\eqref{eq:prob} (\ie, we can compute $z^\star$ from $s^\star$).
We define the \emph{fixed-point residual} at iterate $s^k$ for parameter $x$ as~$T(s^k, x) - s^k$.
As $k \to \infty$, the fixed-point residual converges to 0 for common cases of operators (\eg, averaged or contractive~\cite[Corollary 5.8, Theorem 1.50]{bauschkeConvexAnalysisMonotone2017}, \cite[Theorem 1]{ryuLargeScaleConvexOptimization2022}).
Furthermore, it is often the case that the fixed-point residual with proper scaling can model the optimality conditions of~\eqref{eq:prob}.
In Section~\ref{sec:experiments}, we provide algorithm-specific scaling matrices $H$ such that $H (s^K - s^{K-1})$ recovers the optimality conditions of the specific problem.
Therefore, the fixed-point residual is an easily computable measure of suboptimality and infeasibility.
It is worth noting that in some cases, the fixed-point residual may not monotonically decrease; it may even increase for some iterations.
However, for any convergent first-order method, the residual will still converge to 0 as $k \to \infty$.

We are interested in numerically verifying that the worst-case magnitude of the fixed-point residual after $K$ iterations is less than an accepted tolerance $\epsilon > 0$. 
Specifically, we would like to verify the following condition:
\begin{equation}\label{eq:verify}
\|H ( T^K(s^0, x) - T^{K-1}(s^0, x) ) \|_{\infty} \le \epsilon,\quad \forall x \in X, \;s^0 \in S,
\end{equation}
where $T^K(s^0, x)$ denotes the $K$-times composition of $T$ with parameter $x$, starting from iterate $s^0$.
Note that the $\ell_\infty$-norm is a standard way to measure convergence and is commonly used in most general-purpose solvers~\cite{applegatePracticalLargeScaleLinear2021,gurobi,ocpb:16,stellatoOSQPOperatorSplitting2020}.
Verifying condition~\eqref{eq:verify} boils down to checking if the optimal value of the optimization problem
\begin{equation}\label{eq:vp}\tag{${\rm VP}$}
    \delta_K = \begin{array}[t]{ll}
\text{maximize} & \|H (s^{K} - s^{K-1})\|_{\infty}\\ 
        \text{subject to} & s^{k+1} = T(s^k, x),\quad k=0,1\dots, K-1,\\
                          & s^0 \in S,\quad x \in X,
    \end{array}
\end{equation}
is less than $\epsilon$, where the variables are the problem parameters $x$ and the iterates $s^0,\dots,s^K$.
In the rest of the paper, we will describe scalable ways to formulate and solve this problem as a MILP.
We refer to~\eqref{eq:vp} as the \emph{verification problem}.

\subsection{Related works}\label{sec:relatedworks}
There has been much interest in both analyzing theoretical convergence properties of first-order methods and verifying their practical properties in different applications.

\paragraph{Performance estimation}
The optimization community has recently devoted significant attention on computer-assisted tools to analyze the worst-case finite step performance of first-order optimization algorithms.
Notable examples include the performance estimation problem (PEP)~\cite{droriPerformanceFirstorderMethods2014,taylorExactWorstCasePerformance2017,taylorSmoothStronglyConvex2017} approach that formulates the worst-case analysis as a semidefinite program (SDP), and the integral quadratic constraints (IQCs) framework that models optimization algorithms as dynamical systems, analyzing their performance using control theory~\cite{lessardAnalysisDesignOptimization2016}.
Interestingly, solutions to IQCs can be viewed as feasible solutions of a specific PEP formulation searching for optimal linear convergence rate using Lyapunov functions~\cite{taylorLyapunovFunctionsFirstOrder2018}.
By defining the appropriate constraints (\ie, interpolation conditions), the PEP framework can model complex methods, including operator splitting~\cite{collaAutomaticPerformanceEstimation2023,ryuOperatorSplittingPerformance2020,zamaniExactWorstcaseConvergence2024}, linear operators~\cite{bousselmiInterpolationConditionsLinear2024}, and stochastic optimization algorithms~\cite{taylorStochasticFirstorderMethods2019}.
Thanks to user-friendly toolboxes such as PEPit~\cite{goujaudPEPitComputerassistedWorstcase2024}, PEP has promoted numerous discoveries, including the design of optimal algorithms~\cite{droriEfficientFirstorderMethods2020}, by constructing custom inequality constraints~\cite{parkOptimalFirstOrderAlgorithms2024} or by formulating the design problem as bilevel optimization where PEP serves as the subproblem~\cite{dasguptaBranchandboundPerformanceEstimation2024}.
We provide a detailed discussion of our work and how it differs from the PEP framework in Section~\ref{sec:pep_comparison}.

\paragraph{Verification for real-time optimization}
In many embedded optimization problems from engineering or optimal control, it is important to guarantee an approximately accurate solution in real-time, which corresponds to a tight iteration limit \cite{jerezEmbeddedOnlineOptimization2014,odonoghueSplittingMethodOptimal2013,stathopoulosOperatorSplittingMethods2016}.
This motivates our verification problem as the convergence behavior in the short-term (\ie~for a small, fixed number of iterations) can behave very differently from the long-term convergence rate~\cite[Section VII]{richterComputationalComplexityCertification2012}\cite{stellatoOSQPOperatorSplitting2020}.
Due to their widespread applicability, many works have focused on convergence verification and complexity estimates for convex QPs~\cite{arnstromExactComplexityCertification2020, arnstromComplexityCertificationProximalPoint2021, patrinosDouglasrachfordSplittingComplexity2014}.
Critically, these methods use the parametric QP structure by looking at box constraints~\cite{ciminiComplexityConvergenceCertification2019} or the active set of constraints at optimality~\cite{ciminiExactComplexityCertification2017}.
In optimal control, model predictive control problems are of particular interest as the individual parametric QPs are often not large problems, but it is imperative that they can be approximately solved in a small amount of time~\cite{bemporadSimpleCertifiableQuadratic2012, ciminiEmbeddedModelPredictive2021}.
Model predictive control is particularly important among the examples we consider in this paper, as it satisfies all properties we desire: it gives a small to medium family of parametric convex QPs for which we aim to verify the performance of first-order methods within a small number of iterations.

\paragraph{Neural network verification}
Our approach closely connects with the neural network verification literature. The main goal in this area is to verify that, for every allowable input (\eg, all permissible pixel perturbations of training images), the output of a trained neural network satisfies certain properties (\eg, correctly classifies the images). This ensures that there are \emph{no adversarial examples} that are misclassified.
Similarly, in this paper, we verify that at a given iteration, for every allowable problem instance, the norm of the residual remains below a precision threshold~$\epsilon$, guaranteeing that there are \emph{no adversarial problem instances} that are not solved to the desired precision. 
The neural network verification problem is inherently challenging and nonconvex, and various approaches frame it as a MILP~\cite{JMLR:v23:22-0277,fischettiDeepNeuralNetworks2018,tjeng2018evaluating} by encoding ReLU activation functions with discrete variables. Lifted representations of these activation functions can yield tighter problem relaxations and have been shown to reduce solution times~\cite{andersonStrongMixedintegerProgramming2020,tjandraatmadjaConvexRelaxationBarrier2020,tsayPartitionBasedFormulationsMixedInteger2021}.
However, successfully verifying large, general neural network architectures requires specialized branch-and-bound algorithms~\cite{bunelUnifiedViewPiecewise2018,shiNeuralNetworkVerification2024} featuring specialized bound tightening~\cite{hojnyVerifyingMessagepassingNeural2024}, GPU-based bound propagation~\cite{wangBetaCROWNEfficientBound2021} and custom cutting planes~\cite{zhangGeneralCuttingPlanes2022}. Despite recent progress, the complex structure of trained neural networks still makes it challenging to verify architectures with tens of layers. 
While also relying on MILP formulations, our work exploits the specific convergence behavior of first-order methods to verify algorithm performance over tens of iterations.
In addition, we model nonconvex iteration functions (\eg, soft thresholding).

\subsection{Relation to Performance Estimation Problem (PEP)}\label{sec:pep_comparison}
It is important to highlight some key differences between our proposed framework and the existing PEP framework.
They are complementary tools since, at their core, both frameworks aim to answer a similar question: \emph{what is the worst-case performance of a given first-order method on some family of optimization problems?}
In PEP, the answer is provided as a solution to the following optimization problem
\begin{equation}
\label{prob:perf_est}
	\begin{array}{lll}
		\text{maximize} & \mathcal{P}(f, \{s^k\}) & \text{(Performance metric)} \\
        \text{subject to} &
        f \in \mathcal{F} & \text{(Optimization problem)} \\
        & \{s^k\} = \mathcal{A}(s^0, f), \quad s^0 \in S & \text{(First-order method)}
	\end{array}
\end{equation}
where $\mathcal{P}$ is a performance metric, $\mathcal{F}$ is a family of optimization problem represented by a function or operator $f$, $S$ is a set where the initial iterate $s^0$ resides, and $\mathcal{A}$ is a given first-order method. The solution $(f^\star, s^{0\star})$ of~\eqref{prob:perf_est} encodes the problem instance that achieves the worst-case performance.
The key difference between the PEP framework and this paper comes from the encodings of $f\in\mathcal{F}$ and $s^0 \in S$.

The PEP framework represents $f \in \mathcal{F}$ by imposing interpolation conditions on algorithm iterates $\{s^k\} \subseteq \reals^d$.
Since such conditions are quadratic, the above problem becomes a non-convex Quadratically-constrained Quadratic Program (QCQP), and we denote this problem as (QCQP-PEP).
Notably, if the formulation depends only on the inner products of~$\{s^k\}$, then we can exactly reformulate QCQP-PEP as a tractable convex SDP (SDP-PEP)~\cite[Section 3.2]{droriPerformanceFirstorderMethods2014}, \cite[Theorem 5]{taylorSmoothStronglyConvex2017}.
The SDP-PEP enjoys favorable properties such as dimension independence and provides direct convergence proofs by analyzing its dual~\cite[Section 3.2]{droriPerformanceFirstorderMethods2014}, \cite[Section 3.4]{taylorSmoothStronglyConvex2017}.
However, applying the SDP-PEP formulation to~\eqref{eq:prob} can result in a pessimistic, \ie, non-tight, worst-case result.
First of all, the problem class $\mathcal{F}$ of~\eqref{eq:prob} cannot be represented in terms of the interpolation conditions of SDP-PEP.
This is due to the fact that the worst-case instance of SDP-PEP is invariant under orthogonal transformation, whereas it is not in our setup as observed in prior work~\cite[Section 3.1]{ranjanVerificationFirstOrderMethods2025}.
Also, SDP-PEP only allows for $S$ that can be represented by inner product values of iterates such as $S = \{s^0 \mid \|s^0-s^\star\|_2 \le R\}$.
Therefore, initialization schemes, such as warm-starting techniques, where $S$ depends on constant vectors in $\reals^d$, cannot be encoded by SDP-PEP.
In the same context, it is not possible in SDP-PEP to represent the performance metric of termination criteria using the $\ell_\infty$-norm because this norm cannot be represented in terms of the inner products of the iterates~$\{s^k\}$.
In contrast to the SDP-PEP formulation, our proposed verification problem (VP) can successfully mitigate the aforementioned pessimism by explicitly representing parametric QPs as $f$, as well as~$\ell_\infty$-norms and warm-starts using MILP.

We would like to clarify that, even though the SDP-PEP limitations mentioned earlier do not apply to QCQP-PEP, solving the latter can be numerically challenging, as non-convex QCQPs are NP-hard~\cite{wangTightnessSDPRelaxations2021}.
Furthermore, algorithms for non-convex QCQPs, such as spatial branch-and-bound, are underdeveloped compared to algorithms and technology for MILP solvers~\cite{gurobi}.
We do not present this work as an intended replacement for PEP; instead, we answer the same question with an approach that is neither an extension nor a special case to the prior PEP-based framework.

\subsection{Our contributions}
In this paper, we introduce a new MILP framework to exactly verify the performance of first-order optimization algorithms for QPs. 
\begin{itemize}
    \item We formulate the performance verification problem as a MILP where the objective is to maximize the $\ell_\infty$-norm of the convergence residual after a given number of iterations.
    We model common gradient, projection, and proximal steps as either affine or piecewise affine constraints in the verification problem. 
    \item We construct convex hull formulations of piecewise affine steps of the form of the soft-thresholding operator, the rectified linear unit, and the saturated linear unit, and show that the corresponding separation problems can be solved in polynomial time.
\item We introduce various techniques to enhance the scalability of the MILP. Specifically, we develop a custom bound tightening technique combining interval propagation, convergence properties from operator theory, and optimiza\-tion-based bound tightening. Furthermore, we devise a sequential technique to solve the verification problem by iteratively increasing the number of steps until we reach the desired number of iterations. 
\item We compare our method against state-of-the-art computer assisted performance estimation techniques, on LP instances in network optimization and QP instances in sparse coding using Lasso as well as optimal control, obtaining multiple orders of magnitude reduction on worst-case norm of the scaled fixed-point residual.
\end{itemize}

\section{Mixed-Integer Linear Programming formulation}\label{sec:milp_formulation}
The remainder of this paper focuses on modeling~\eqref{eq:vp} using MILP techniques along with our many problem-specific improvements to speed up the solving procedure.
For a general overview of MILP, we refer to~\cite{confortiIntegerProgramming2014}.
Since algorithms often involve multiple steps for each iteration, we use the following notation to represent each iteration $k$:
\begin{equation}\label{eq:iterations}
    s^{k+1} = T(s^k, x)
    \quad \iff \quad \begin{array}{ll}
	v^{1} &= \phi_1(s^k, x)\\
	v^{2} &= \phi_2(v^1, x)\\
	 &\;\vdots\\
	s^{k+1} &= \phi_l(v^{l-1}, x),\\
    \end{array}
\end{equation}
where $\phi_{1},\dots,\phi_{l}$ are the intermediate step mappings.
In the intermediate steps, proximal algorithms rely on evaluating the proximal operator of a function $g$, defined as~$\prox_{g}(v) = \argmin_y\;(g(y) + (1/2)\|y - v\|_2^2)$~\cite{beckFirstOrderMethodsOptimization2017,parikhProximalAlgorithms2014}. 
Most intermediate steps arising in LP and QP can be computed as either \emph{affine} or \emph{piecewise affine} steps~\cite[Sec.\ 6]{parikhProximalAlgorithms2014}, which we express as MILP constraints.

\subsection{Objective formulation}\label{subsec:obj}
To model the $\ell_\infty$-norm in the objective of problem~\eqref{eq:vp}, we define~$\delta_K = \|H(s^K - s^{K-1})\|_{\infty} = \|t\|_{\infty}$, with $t = H(s^{K} - s^{K-1}) \in \reals^{h}$ being the scaled residual at the last step.
Using the lower bounds $\ubar{s}^{K-1}, \ubar{s}^{K}$ and the upper bounds, $\bar{s}^{K-1}$ and $\bar{s}^{K}$, of the last two iterates $s^{K-1}$ and $s^{K}$, we can derive the lower and upper bounds on the residual $t$, denoted $\ubar{t}$ and $\bar{t}$, respectively~\cite[Sec. 3]{gowalScalableVerifiedTraining2019}.
We provide the explicit bound formulas in Section~\ref{subsec:bound_tightening}.

Then, by writing $t$ as the difference of two nonnegative variables $t^{+}$ and $t^{-}$, \ie, $t = t^{+} - t^{-}$, we can represent the absolute value of its components as $t^{+} + t^{-}$ and write~$\delta_{K}$ as the following constraints:
\begin{equation}\label{eq:obj_reformulation}
    \begin{array}{ll}
    &t = t^{+} - t^{-},\quad t^{+} \le \bar{t} \odot \omega, \quad t^{-} \le -\ubar{t} \odot (\ones - \omega)\\
    & t^{+} + t^{-} \le \delta_K \ones \le t^{+} + t^{-} + \max\{\bar{t}, -\ubar{t}\} \odot (\ones - \gamma)\\
    & \ones^T\gamma = 1,\quad t^{+} \ge 0,\quad  t^{-} \ge 0,
    \end{array}
\end{equation}
where $\odot$ is the elementwise product. 
Here, we introduced variable $\omega \in \{0, 1\}^{h}$ to represent the absolute values of the components of $t$, and variable $\gamma \in \{0, 1\}^{h}$ to represent the maximum inside the $\ell_\infty$-norm.
We remark that the lower bound constraints $t^{+} + t^{-} \le \delta_K \ones$ provide a complete description of $\delta_K$, but they are not strictly necessary due to the maximization.

\subsection{Affine steps}
Consider affine steps of the following form:
\begin{equation}\label{eq:affinestep}
    \phi(v, x) = \{w \in \reals^d \mid M w = By,\quad y = (v, x)\},
\end{equation}
where $M \in \reals^{d \times d}$ is an invertible matrix and $B \in \reals^{d \times (d + p)}$.
We directly express these iterations as linear equality constraints for the verification problem~\eqref{eq:vp}.

\subsubsection{Explicit steps}
When $M=I$, affine steps represent explicit iterations.

\paragraph{Gradient step}
A gradient step of size $\eta > 0$ for a quadratic function $f(v,x) = (1/2)v^TPv + x^Tv$ can be written as
\begin{equation}\label{eq:gd_step}
    \phi(v, x) = \{w \in \reals^d \mid w = v - \eta \nabla f(v, x)  = (I - \eta P)v - \eta x\},
\end{equation}
where $\nabla f(v, x) = Pv + x$ is the gradient with respect to the first argument of~$f$.
Therefore, it is an affine step with $M=I$ and $B = [I\! -\!\eta P\;  -\!\eta I]$.

\paragraph{Momentum step}
For a momentum step~\cite{nesterov1983method}\cite[Sec.\ 2.2]{ranjanVerificationFirstOrderMethods2025}, an operator is applied to a linear combination of iterates at both iteration $k$ and iteration $k-1$.
Let $g$ denote the intermediate step variables that momentum is applied to, and $v = (g^k, g^{k-1})$ is the concatenation of intermediate step variables from iteration $k$ and $k-1$.
Given $\beta^k > 0$, momentum updates are defined by:
\begin{align*}
    \tilde{v} &= g^k + \beta^k(g^k - g^{k-1})\\
    w &= \tilde{v} - \eta\nabla f(\tilde{v}, x),
\end{align*}
and so can be written in our framework via substitution as:
\begin{equation*}
\phi(v, x) = \{w \in \reals^d \mid w = (1+\beta^k)(I\! -\!\eta P)g^k -\!\beta^k(I\! -\!\eta P)g^{k-1} - \eta x \}.
\end{equation*}
Therefore, it is an affine step with $M=I$ and $B = [(1+\beta^k)(I\! -\!\eta P)\;  -\!\beta^k(I\! -\!\eta P) \; -\!\eta I]$.

\paragraph{Shrinkage operator}
The shrinkage operator represents the proximal operator of the squared $\ell_2$-norm function $f(v) = (1/2)\|v \|^2$~\cite[Sec.\ 6.1.1]{parikhProximalAlgorithms2014}.
Given $\lambda > 0$, we have
\begin{equation*}
    \phi(v, x) = \prox_{\lambda f}(v) = \{ w\in \reals^d \mid w = (1+\lambda)^{-1}v\},
\end{equation*}
which is an affine step with $M = I$ and $B = [(1\!+\!\lambda)^{-1}\!I \; 0]$.

\paragraph{Fixed-schedule, average-based restarts}
Similar to the momentum steps, fixed-schedule restarts consider a history of $h$ previous iterates, where $h$ is a predefined lookback window~\cite{applegatePracticalLargeScaleLinear2021,applegateFasterFirstorderPrimaldual2023}.
This scheme averages the previous $h$ iterates so that $w = (1/h)\sum_{j=k-h+1}^k g^j$.
So, the restart can be written as
\begin{equation*}
    \phi(v, x) = \{w \in \reals^d \mid w = (1/h) (\ones^T \otimes I) v \},
\end{equation*}
where $v = (g^k, g^{k-1},\dots g^{k-h+1})$.
The operator $\otimes$ represents the Kronecker product and so $\ones^T \otimes I$ is the horizontal stack of $h$ copies of the identity matrix.
This is an affine step with $M=I$, and $B = (1/h) (\ones^T \otimes I)$.

\paragraph{Halpern anchor}
Halpern mechanisms are also known as \emph{anchoring} based methods~\cite{halpernFixedPointsNonexpanding1967, parkExactOptimalAccelerated2022}.
The anchoring step computes a weighted average with the \emph{initial} iterate $s^0$.
So, an anchored gradient step is, given $\beta^k \in (0, 1)$, written as
\begin{equation*}
    w = \beta^k  (g^k - \eta \nabla f(g^k, x)) + (1- \beta^k)s^0.
\end{equation*}
By substituting in the gradient step~\eqref{eq:gd_step} and letting $v=(g^k, s^0)$, we can write
\begin{equation*}
    \phi(v, x) = \{ w \in \reals^d \mid w = \beta^k ((I - \eta P) g^k - \eta x) + (1-\beta^k)s^0\},
\end{equation*}
which is an affine step with $M=I$ and $B = [\beta^k(I\! -\!\eta P)\;\;  (1\! -\!\beta^k)I \; -\!\beta^k\eta I]$.

\subsubsection{Implicit steps}
When $M\neq I$, affine steps represent implicit iterate computations that require linear system solutions with left-hand side matrix $M$.

\paragraph{Proximal operator of a constrained quadratic function}
The proximal operator with $\lambda > 0$ of a convex quadratic function subject to linear equality constraints defined by $A \in \reals^{m\times d}$, $b \in \reals^m$, is given by
\begin{equation}\label{eq:prox_constrained_qp}
    \prox_{\lambda f(\cdot, x)}(v) = \argmin_{\{Az= b\}}~\left\{(1/2) z^T P z + x^T z + 1/(2\lambda) \|z-v\|^2\right\}.
\end{equation}
The solution of~\eqref{eq:prox_constrained_qp} can be encoded as a linear system of equations defined by the KKT optimality conditions~\cite[Sec.\ 10.1.1]{boydConvexOptimization2004}.
These include steps in common algorithms such as OSQP~\cite{stellatoOSQPOperatorSplitting2020} and SCS~\cite{ocpb:16,odonoghue:21}, and projections onto affine sets~\cite[Sec.\ 6.2]{parikhProximalAlgorithms2014}.
By introducing dual variable $\nu \in \reals^m$, and defining $y = (v, x, b)$, the KKT conditions corresponding to the \prox~can be written as
\begin{equation*}
    \phi(w, x) = \left\{w \in \reals^{d + m} \;\middle|\; \begin{bmatrix}
        P + \lambda^{-1} I & A^T \\
        A & 0
    \end{bmatrix}\begin{bmatrix} w \\ \nu\end{bmatrix} = \begin{bmatrix}
        \lambda^{-1} I & -I & 0 \\
        0 & 0 & I
    \end{bmatrix}y
    \right\},
\end{equation*}
and therefore can be written as an affine step with the corresponding matrices.

\paragraph{Proximal operator of an unconstrained quadratic function}
The proximal operator of a convex quadratic function is a special case of~\eqref{eq:prox_constrained_qp} without the affine constraints and dual variables.
As such, the constraints simplify to~\cite[Sec.\ 6.1.1]{parikhProximalAlgorithms2014}
\begin{equation*}
    \phi(v, x) = \prox_{\lambda f(\cdot, x)} (v) = \{w \in \reals^d \mid (P + \lambda^{-1} I) w = \lambda^{-1}v - x\},
\end{equation*}
which corresponds to an affine step with $M = P + \lambda^{-1} I$ and $B = [\lambda^{-1} I\; -\!I]$.

\paragraph{Projection onto an affine subspace}
The orthogonal projection onto the affine subspace defined by $Az = b$ is also a special case of~\eqref{eq:prox_constrained_qp} with $P = 0$, $x = 0$.
We assume that $A$ is a wide matrix ($m < d$) with linearly independent rows.
By rearranging the terms, this can also be written as an explicit affine step of the form 
\begin{equation*}
    \phi(v, x) = \prox_{\lambda f(\cdot, x)} (v) = \{w \in \reals^d \mid w = (I - A^\dag A)v + A^\dag b\},
\end{equation*}
where $A^\dag = A^T(AA^T)^{-1}$ is the pseudoinverse of $A$.
Thus it also corresponds to an affine step with $M=I$ and $B = [I\!-\!A^\dag\!A\; A^\dag]$.

\subsection{Piecewise affine steps}\label{subsec:piecewise_affine_steps}
We consider elementwise separable, piecewise, affine steps of the form 
\begin{equation}\label{eq:pwa_steps}
    (\phi(v, x))_i = \begin{cases}
        a_1^Ty& y \in V_1 \\
        a_2^Ty& y \in V_2\\
        \vdots\\
        a_h^Ty& y \in V_h
    \end{cases}\quad \text{with} \quad y = (v, x) \in \reals^{d + p},
\end{equation}
for $i=1,\dots,d$, where $a_1,\dots,a_h \in \reals^{d + p}$ and $V_1,\dots, V_h$ are disjoint polyhedral regions partitioning $\reals^{d + p}$.
We assume these steps to be componentwise nondecreasing in $y = (v, x)$.
To model the piecewise affine behavior over disjoint regions of~\eqref{eq:pwa_steps}, we use binary variables. 
We derive an exact formulation of the convex hull to the region
\begin{equation}\label{eq:ncvx-step}
 \Phi = \{(y, w) \in [\ubar{y}, \bar{y}] \times \reals \mid w = (\phi(v,x))_i \;\text{with}\;y=(v, x)\},
\end{equation}
where the bounds are $\ubar{y} = (\ubar{v}, \ubar{x}) \le \bar{y} = (\bar{v}, \bar{x})$, for three common steps, the {\it rectified linear unit}, the {\it soft-thresholding}, and the {\it saturated linear unit}.

\paragraph{Rectified linear unit}
We consider the composition of an affine function and the {\it rectified linear unit} operator with the form
\begin{equation}\label{eq:relustep}
    (\phi(v, x))_i = \ReLU(a^Ty) = \max\{a^Ty, 0\},
\end{equation}
where $a \in \reals^d$.
To formulate the step, we must introduce big-$M$ constraints, which are used in tandem with binary variables to make constraints redundant.
Let 
\begin{equation}\label{eq:bigM}
    M^+ = \max_{y \in [\ubar{y}, \bar{y}]}a^Ty\quad \text{and}\quad M^- = \min_{y \in [\ubar{y}, \bar{y}]} a^Ty.
\end{equation}
Assume the constants from~\eqref{eq:bigM} satisfy $M^+ > 0$ and $M^- < 0$, otherwise $\phi$ is just a single linear equality.
We can introduce a binary variable $\omega \in \{0, 1\}$, and write a formulation for~\eqref{eq:relustep} as
\begin{equation*}
    \begin{array}{l}
w \ge 0, ~ w \ge a^Ty, ~ w \le M^+ \omega,\\
         w \le a^Ty - M^{-}(1-\omega).\\
\end{array}
\end{equation*}
In this case, the convex hull of~\eqref{eq:ncvx-step} can be directly described using~\cite[Theorem 1]{tjandraatmadjaConvexRelaxationBarrier2020} that we restate here, for completeness, and then extend to the other cases.
\begin{theorem}[Convex hull of rectified linear unit {\cite[Theorem 1]{tjandraatmadjaConvexRelaxationBarrier2020}}]\label{thm:relu_convhull}
Consider the region~$\Phi = \{(y, w) \in [\ubar{y}, \bar{y}] \times \reals \mid w = \ReLU(a^Ty)\}$.
Define vector $\ell^0 \in \reals^d$ with entries $\ell^0_i = \ubar{y}_i$ if $a_i \ge 0$ and $\bar{y}_i$ otherwise.
Similarly, we define vector $u^0 \in \reals^d$ with entries $u^0_i = \bar{y}_i$ if $a_i \ge 0$ and $\ubar{y}_i$ otherwise.
Given $I \subseteq J = \{1, \dots, n\} \setminus \{i \mid a_i = 0\}$, we define the lower bounds of the index set by $\ell_I = \sum_{i \in I} a_i \ell^0_i + \sum_{i \notin I} a_i u^0_i$ and the index set $\mathcal{I} = \{(I, o) \in 2^{J} \times J \mid \ell_I  \ge 0, \; \ell_{I \cup \{o\}} < 0\}$.
Note that $\ell_{\emptyset}$ ($\emptyset$ denotes the empty set) corresponds to the upper bound of $a^Ty$ over $[\ubar{y}, \bar{y}]$.
The convex hull of $\Phi$ is
\begin{equation}
    \conv(\Phi) = \left\{(y, w) \in [\ubar{y}, \bar{y}] \times \reals \;\middle|\; \begin{cases}
        w = a^Ty & \ell_J \ge 0\\
        w = 0 & \ell_\emptyset < 0\\
        (y, w) \in Q & \text{otherwise}
    \end{cases}\right\},
\end{equation}
with
\begin{equation*}
    Q =\! \left\{(y, w) \in \reals^{d + p + 1} \,\middle|
    \!\!\!\!\begin{array}{ll}
    &w \ge a^Ty,\quad w \ge 0\\ 
    &w \le \sum_{i \in I} a_i(y_i - \ell^0_i) + \frac{\ell_I}{u^0_o - \ell^0_o} (y_o - \ell^0_o),\; \forall (I, o) \in \mathcal{I}\\
    \end{array}\right\}.
\end{equation*}
\end{theorem}
Note that while the number of inequalities for the convex hull in Theorem~\ref{thm:relu_convhull} is exponential in $d$, there exists an efficient separation oracle that either verifies if a candidate point $(y, w) \in \conv(\Phi)$ or gives a violated inequality~\cite[Proposition 2]{tjandraatmadjaConvexRelaxationBarrier2020}.
These steps appear when composing affine steps and projections onto the nonnegative orthant. For example, {\it projected gradient descent} with step size $\eta > 0$ applied to problem~\eqref{eq:prob} with $q(x) = x$, $A=-I$, $b(x) = 0$, and $C = \reals_+^n$, becomes $s^{k+1} = \Pi ((I - \eta P)s^k - \eta x)$ where $\Pi = \ReLU$ clips the negative entries of its argument to $0$. Therefore, with $v=s^k$, we can write the projected gradient descent iterations as 
\begin{equation*}
    (\phi(v, x))_i = \ReLU(((I - \eta P)v - \eta x)_i),
\end{equation*}
which corresponds to equation~\eqref{eq:relustep} with $a$ being the $i$th row of $[I\!-\!\eta P\;\; -\eta I]$.

\begin{figure}[t!]
\centering
\begin{tikzpicture}[scale=0.12]
\fill[purple!20, semitransparent] (8,17) -- (17, 8)  -- (24, 9) -- (15,18) -- cycle;
    \fill[cyan!35] (11, 14) -- (2,2) -- (14, 11) -- cycle;
    \fill[cyan!20] (11, 14) -- (14, 15) -- (18, 15) -- (21, 12) -- (18, 11) -- (14, 11) -- cycle;
    \fill[cyan!25] (18, 15) -- (30, 24) -- (21, 12) -- cycle;
\draw[->, thin, opacity=0.3] (4, 13) -- (30, 13) node[below] {$y_1$};
    \draw[->, thin, opacity=0.3] (1, 8) -- (31, 18) node[above] {$y_2$};
    \draw[->, thin, opacity=0.3] (16, 4) -> (16, 25) node[left] {$w$};
\draw[thick, dotted, purple] (2,11) -- (18, 11) -- (30,15) -- (14,15) -- (2,11);
    \fill[purple!10, nearly transparent] (2,11) -- (18, 11) -- (30,15) -- (14,15) -- (2,11);
\draw[thick, purple] (8,17) -- (17, 8);
    \draw[thick, purple] (15,18) -- (24, 9);
\draw[thick, myblue] (18,11) -- (14, 11);
    \draw[thick, myblue] (18,11) -- (21, 12);
    \draw[thick, myblue] (11,14) -- (14, 15);
    \draw[thick, myblue] (14,15) -- (18,15);
    \draw[thick, myblue] (2,2) -- (14, 11);
    \draw[thick, myblue] (2,2) -- (11,14);
    \draw[thick, myblue] (30,24) -- (21, 12);
    \draw[thick, myblue] (30,24) -- (18,15);
\node[purple] at (5, 19) {$y_1 + y_2 + \lambda = 0$};
    \node[purple] at (26, 7) {$y_1 + y_2 - \lambda = 0$};
    \node[myblue] at (30, 26) {$(\bar{y}_1, \bar{y}_2, \bar{y}_1 + \bar{y}_2 - \lambda)$};
    \fill[myblue] (30, 24) circle (7pt);
    \node[myblue] at (3, 1) {$(\ubar{y}_1, \ubar{y}_2, \ubar{y}_1 + \ubar{y}_2 + \lambda)$};
    \fill[myblue] (2, 2) circle (7pt);

    \fill[purple] (11, 14) circle (7pt);
    \fill[purple] (14, 11) circle (7pt);
    \fill[myblue] (18, 11) circle (7pt);
    \fill[myblue] (14, 15) circle (7pt);
    \fill[purple] (21, 12) circle (7pt);
    \fill[purple] (18, 15) circle (7pt);
\end{tikzpicture}
\begin{tikzpicture}[scale=0.12]
\fill[cyan, transparent] (18, 15) -- (30, 24) -- (21, 12) -- cycle;
    \fill[cyan, nearly transparent] (11, 14) -- (2,2) -- (14, 11) -- cycle;

    \fill[cyan!85, nearly transparent] (2,2) -- (18,11) -- (14, 11) -- cycle;
    \fill[cyan, nearly transparent] (2,2) -- (18,11) -- (21, 12) -- cycle;
    \fill[cyan, nearly transparent] (30,24) -- (18,11) -- (21, 12) -- cycle;
    \fill[cyan!80, nearly transparent] (30,24) -- (18,11) -- (14,11) -- cycle;
    \fill[cyan!80, nearly transparent] (30,24) -- (14,11) -- (11,14) -- cycle;
\draw[->, thin, opacity=0.3] (4, 13) -- (30, 13) node[below] {$y_1$};
    \draw[->, thin, opacity=0.3] (1, 8) -- (31, 18) node[above] {$y_2$};
    \draw[->, thin, opacity=0.3] (16, 4) -> (16, 25) node[left] {$w$};
\draw[thick, purple] (11,14) -- (14, 11);
    \draw[thick, dashed, purple] (18,15) -- (21, 12);
\draw[thick, myblue] (18,11) -- (14, 11);
    \draw[thick, myblue] (18,11) -- (21, 12);
    \draw[thick, dashed, myblue] (11,14) -- (14, 15);
    \draw[thick, dashed, myblue] (14,15) -- (18,15);
    \draw[thick, myblue] (2,2) -- (14, 11);
    \draw[myblue] (2,2) -- (21, 12);
    \draw[myblue] (2,2) -- (18,11);
    \draw[thick, myblue] (2,2) -- (11,14);
    \draw[dashed, myblue] (2,2) -- (14,15);
    \draw[dashed, myblue] (2,2) -- (18,15);
    \draw[thick, myblue] (30,24) -- (21, 12);
    \draw[myblue] (30,24) -- (18,11);
    \draw[myblue] (30,24) -- (14, 11);
    \draw[myblue] (30,24) -- (11, 14);
    \draw[thick, myblue] (30,24) -- (21, 12);
    \draw[thick, dashed, myblue] (30,24) -- (18, 15);
    \draw[dashed, myblue] (30,24) -- (14, 15);
\fill[myblue] (30, 24) circle (7pt);
    \fill[purple] (11, 14) circle (7pt);
    \fill[purple] (14, 11) circle (7pt);
    \fill[myblue] (18, 11) circle (7pt);
    \fill[myblue] (14, 15) circle (7pt);
    \fill[purple] (21, 12) circle (7pt);
    \fill[purple] (18, 15) circle (7pt);
\node[myblue] at (28, 26) {$\hphantom{(\bar{y}_1, \bar{y}_2, \bar{y}_1 + \bar{y}_2 - \lambda)}$};
    \node[myblue] at (3, 1) {$\hphantom{(\ubar{y}_1, \ubar{y}_2, \ubar{y}_1 + \ubar{y}_2 + \lambda)}$};
\end{tikzpicture}
\caption{Left: Example of soft-thresholding $w = \mathcal{T}_\lambda(y_1 + y_2)$ with $\lambda = 0.5$ variable bounds $\ubar{y}_1 \leq y_1 \leq \bar{y}_1$ and $\ubar{y}_2 \leq y_2 \leq \bar{y}_2$; the two lines $y_1 + y_2 + \lambda = 0$ and $y_1 + y_2 - \lambda = 0$ delimit the area where $w = 0$.
Right: the convex hull of $w= \phi_\lambda(y_1 + y_2)$ given by Theorem~\ref{thm:softthreshold_convhull}.
\label{fig:soft3d}}
\ifipco \vspace{-1em}\fi
\end{figure}

\paragraph{Soft-thresholding}
We consider the composition of an affine function and the {\it soft-thresholding} operator with the form
\begin{equation}\label{eq:soft-thresh}
   \!\!\!\!(\phi(v, x))_i = \mathcal{T}_\lambda(a^Ty) =
   \begin{cases}
        a^T y - \lambda & {a}^T {y} > \lambda \\
        0 & |{a}^T {y}| \le \lambda \\
        {a}^T {y} + \lambda & {a}^T {y} < -\lambda
   \end{cases}\quad \text{with}\quad y = (v, x),
\end{equation}
where $\mathcal{T}_{\lambda}$ is the {\it soft-thresholding} operator with constant $\lambda > 0$.

By introducing binary variables $\omega, \zeta \in \{0, 1\}$ and defining $M^+, M^-$ as in~\eqref{eq:bigM}, we can formulate~\eqref{eq:soft-thresh} as
\begin{align*}
a^Ty - \lambda \le w &\le a^T y + \lambda \\
         a^Ty + \lambda - 2\lambda(1 - \zeta) \le w &\le a^Ty - \lambda + 2\lambda(1-\omega)\\
         (M^- + \lambda)\zeta \le w &\le (M^+ - \lambda)\omega\\
         \lambda + (M^- - \lambda)(1-\omega) \le a^Ty &\le \lambda + (M^+ - \lambda)\omega\\
         -\lambda + (M^- + \lambda)\zeta \le a^Ty &\leq -\lambda + (M^+ + \lambda)(1-\zeta). \end{align*}
Note that, if $M^+ < \lambda$ or $M^- > -\lambda$, then we can prune at least one of the binary variables by setting it to zero.

In order to construct the convex hull of~\eqref{eq:soft-thresh}, we extend~\cite[Theorem 1]{tjandraatmadjaConvexRelaxationBarrier2020} from
the convex hull of the piecewise \emph{maximum} of affine functions to the convex hull of nonconvex monotonically nondecreasing functions such as the soft-thresholding function. 
Figure \ref{fig:soft3d} illustrates the operator $w = \mathcal{T}_\lambda(y_1 + y_2)$ with $\lambda = 0.5$ with its convex hull.
We can now obtain the convex hull of~\eqref{eq:soft-thresh} with the following Theorem.
\begin{theorem}[Convex hull of soft-thresholding operator]\label{thm:softthreshold_convhull}
    Define $\ell^0, u^0, \ell_I, J$ as done in Theorem~\ref{thm:relu_convhull}.
    Additionally, given an index set $I$, define the upper bounds $u_I = \sum_{i \in I} a_i u^0_i + \sum_{i \notin I} a_i \ell^0_i$.
    Note that $u_{\emptyset}$ corresponds to the lower bound of $a^Ty$ over $[\ubar{y}, \bar{y}]$ and by definition $u_{\emptyset} = \ell_J$ and $u_J = \ell_\emptyset$.
    Also, define the sets of indices $\mathcal{I}^{+} = \{(I, o) \in 2^{J} \times J \mid \ell_I  \ge \lambda, \; \ell_{I \cup \{o\}} < \lambda\}$ and $\mathcal{I}^{-} = \{(I, o) \in 2^{J} \times J \mid u_I  \le -\lambda, \; u_{I \cup \{o\}} > -\lambda\}$.
Consider the region~$\Phi = \{(y, w) \in [\ubar{y}, \bar{y}] \times \reals \mid w = \mathcal{T}_\lambda(a^Ty)\}$. We can write its convex hull as
\begin{equation}
    \conv(\Phi) = \left\{(y, w) \in [\ubar{y}, \bar{y}] \times \reals \;\middle|\; \begin{cases}
        w = a^Ty - \lambda & \ell_J > \lambda \\
        w = a^Ty + \lambda & u_J < -\lambda \\
        w = 0 & -\lambda \le u_\emptyset \le \ell_\emptyset \le \lambda \\
        (y, w) \in Q & \text{otherwise}
    \end{cases}\right\},
\end{equation}
where 
\begin{equation*}
    Q =\! \left\{(y, w) \in \reals^{d + p + 1} \,\middle|
    \!\!\!\!\begin{array}{ll}
    &a^Ty - \lambda \le w \le a^Ty + \lambda\\ 
    &w \le \begin{cases} \frac{u_J - \lambda}{u_J + \lambda} (a^Ty + \lambda) & u_J > \lambda \\ 0 & u_J \le \lambda \end{cases}\\
    &w \ge \begin{cases} \frac{\ell_J + \lambda}{\ell_J - \lambda} (a^Ty - \lambda) & \ell_J < -\lambda \\ 0 & \ell_J \ge -\lambda \end{cases}\\
    &w \le \sum_{i \in I} a_i(y_i - \ell^0_i) + \frac{\ell_I - \lambda}{u^0_o - \ell^0_o} (y_o - \ell^0_o),\; \forall (I, o) \in \mathcal{I}^+\\
    &w \ge \sum_{i \in I} a_i(y_i - u^0_i) + \frac{u_I + \lambda}{\ell^0_o - u^0_o} (y_o - u^0_o),\; \forall (I, o) \in \mathcal{I}^-\end{array}\right\}.
\end{equation*}
\end{theorem}
\begin{proof}
        Write the soft-thresholding operator as $w = \mathcal{T}_\lambda(a^Ty) = w^+ - w^-$, where $w^+ = \ReLU(a^Ty - \lambda) \ge 0$ and $w^- = \ReLU(-a^Ty - \lambda) \ge 0$.
    The first line of $Q$ follows from the pointwise bounds $a^Ty - \lambda \le \mathcal{T}_\lambda(a^Ty) \le a^Ty + \lambda$.
    When $\ell_J < -\lambda < \lambda < u_J$, lines~1--3 of $Q$ together describe the concave and convex envelopes of $\mathcal{T}_\lambda(a^Ty)$ over $[\ell_J, u_J]$.
    The concave envelope consists of $w \le a^Ty + \lambda$ for $a^Ty \in [\ell_J, -\lambda]$ and the hyperplane passing by the line $a^Ty + \lambda = 0$ and the point~$(u_J, u_J - \lambda)$ for $a^Ty \in [-\lambda, u_J]$, giving the upper bound in line~2.
    The convex envelope consists of the hyperplane passing by the point~$(\ell_J, \ell_J + \lambda)$ the line~$a^Ty - \lambda = 0$ for $a^Ty \in [\ell_J, \lambda]$ and $w \ge a^Ty - \lambda$ for $a^Ty \in [\lambda, u_J]$, giving the lower bound in line~3.
    When only one threshold is crossed, the corresponding bound in lines~2--3 simplifies: if $u_J \le \lambda$, then $w^+ = \ReLU(a^Ty - \lambda) = 0$ everywhere, so $w \le 0$; if $\ell_J \ge -\lambda$, then $w^- = \ReLU(-a^Ty - \lambda) = 0$ everywhere, so $w \ge 0$.
    For the remaining inequalities, apply Theorem~\ref{thm:relu_convhull} to $w^+$ (with threshold shifted by $\lambda$) and to $w^-$ (with coefficients $-a$ and threshold shifted by $\lambda$).
    Since $w^- \ge 0$, the upper bounds on $w^+$ from Theorem~\ref{thm:relu_convhull} yield upper bounds on $w = w^+ - w^- \le w^+$, giving line~4 of $Q$.
    Since $w^+ \ge 0$, the upper bounds on $w^-$ yield lower bounds on $w = w^+ - w^- \ge -w^-$, giving line~5 of $Q$.
    For sufficiency, we show that every extreme point $(\hat{y}, \hat{w})$ of $Q \cap [\ubar{y}, \bar{y}] \times \reals$ lies in $\Phi$.
    At any extreme point, the Theorem~\ref{thm:relu_convhull} upper bounds on $w^+$ and $w^-$ are tight~\cite[Theorem~1]{tjandraatmadjaConvexRelaxationBarrier2020}, so the tightest upper and lower bounds on $w = w^+ - w^-$ from lines~1--5 both equal $\mathcal{T}_\lambda(a^T\hat{y})$.
    The envelope constraints (lines~2--3) only further restrict the feasible region, preserving this property.
\ifipco\qed\fi
\end{proof}
Even though the constraints in~$Q$ define exponentially many inequalities, we can define an efficiently solvable separation problem: given a point $(\hat{y}, \hat{w})$, verify that $(\hat{y}, \hat{w}) \in \conv(Q)$, otherwise return a violated inequality.
\begin{proposition}\label{prop:separation_procedure}
The separation problem for $\conv(\Phi)$ is solvable in $O(d + p)$ ~\!time. 
\end{proposition}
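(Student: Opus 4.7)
The plan is to leverage the decomposition used in the proof of Theorem~\ref{thm:softthreshold_convhull}, where we wrote $w = w^+ - w^-$ with $w^\pm$ being compositions of an affine function with a \ReLU. Under this decomposition, the exponentially many inequalities indexed by $\mathcal{I}^+$ are exactly the convex-hull upper bounds of the lifted \ReLU\ region defining $w^+$, and those indexed by $\mathcal{I}^-$ are the convex-hull upper bounds of the lifted \ReLU\ region defining $-w^-$. Consequently, separation over $\conv(\Phi)$ reduces to an $O(1)$ check of the four sandwich-type inequalities plus two calls to a separation oracle for the single-\ReLU\ convex hull.

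First, I would explicitly verify the $O(1)$ constraints: the bounds $a^T \hat y - \lambda \le \hat w \le a^T \hat y + \lambda$ and the linking bounds $\frac{\ell_J + \lambda}{\ell_J - \lambda} (a^T \hat y - \lambda) \le \hat w \le \frac{u_J - \lambda}{u_J + \lambda} (a^T \hat y + \lambda)$. If any of these is violated, return it immediately.

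Next, to find the most violated inequality indexed by $\mathcal{I}^+$, I would invoke the linear-time \ReLU\ separation procedure of Anderson et al.~\cite{andersonStrongMixedintegerProgramming2020} (also described in~\cite{tjandraatmadjaConvexRelaxationBarrier2020}). The key observation is that for a fixed outer index $o$, the minimization of the RHS
\[
\sum_{i \in I} a_i(\hat y_i - \ell^0_i) + \frac{\ell_I - \lambda}{u^0_o - \ell^0_o}\,(\hat y_o - \ell^0_o)
\]
over $I$ decouples across coordinates: each coordinate $i\neq o$ is independently assigned to $I$ or $J\setminus I$ depending on the sign of $a_i(\hat y_i - \ell^0_i) - \tfrac{a_i(u^0_i - \ell^0_i)}{u^0_o - \ell^0_o}(\hat y_o - \ell^0_o)$. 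This produces a candidate set for each of the $d+p$ choices of $o$ in constant time per coordinate once a single linear scan has been precomputed, yielding overall $O(d+p)$ work while maintaining feasibility $(I, o)\in\mathcal{I}^+$. An identical argument applied to the $\mathcal{I}^-$ family handles the lower envelope.

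The main obstacle is verifying that the greedy/decoupled separation step from~\cite{andersonStrongMixedintegerProgramming2020,tjandraatmadjaConvexRelaxationBarrier2020} carries over verbatim after lifting the parameters $x$ into $y = (v, x)$ and after composition with the soft-thresholding decomposition; in particular, one must check that monotonicity of $\mathcal{T}_\lambda$ and the feasibility conditions $\ell_I \ge \lambda$, $\ell_{I\cup\{o\}} < \lambda$ (and the analogues for $\mathcal{I}^-$) are automatically respected by the greedy assignment, so no extra combinatorial search is triggered. Once this is established, the total running time is $O(d+p)$, as claimed.
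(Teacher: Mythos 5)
Your overall route is the same as the paper's: check the polynomially many inequalities directly, then separate over the two exponential families by viewing them as the single-\ReLU\ upper-bound families coming from the decomposition $w=w^+-w^-$ and invoking the linear-time separation of \cite[Prop.~2]{tjandraatmadjaConvexRelaxationBarrier2020}. The paper's proof is exactly this, with one small extra observation you do not need but could use: at most one of the two families can be violated at a given point (depending on the sign of $\hat w$), so a single oracle call suffices.

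There is, however, a gap in the step you yourself flag as the ``main obstacle,'' and the mechanism you sketch does not close it. Minimizing the right-hand side by a per-coordinate greedy \emph{for each fixed} $o$ costs $O(d+p)$ per choice of $o$, i.e.\ $O((d+p)^2)$ overall; the assertion that a ``single linear scan'' makes this constant time per coordinate across all $d+p$ choices of $o$ is not justified, and the greedy assignment as described need not satisfy the membership conditions $\ell_I \ge \lambda$, $\ell_{I\cup\{o\}} < \lambda$ defining $\mathcal{I}^+$. The correct resolution is twofold. First, no new feasibility verification is required: after the shift by $\lambda$, the $\mathcal{I}^+$ (resp.\ $\mathcal{I}^-$) family is literally the $\mathcal{I}$-family of Theorem~\ref{thm:relu_convhull} for $w^+=\ReLU(a^T y-\lambda)$ (resp.\ for $-w^-$), i.e.\ the same objects already used in the proof of Theorem~\ref{thm:softthreshold_convhull}, so \cite[Prop.~2]{tjandraatmadjaConvexRelaxationBarrier2020} applies verbatim --- this citation is how the paper concludes. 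Second, the linear-time bound in that proposition does not come from per-$o$ decoupling but from the structure of the optimizer: the optimal $I$ is a prefix of the order induced by $(\hat y_i-\ell_i^0)/(u_i^0-\ell_i^0)$ and $o$ is the element at which $\ell_I-\lambda$ changes sign, so sorting gives $O((d+p)\log(d+p))$ (as in the paper's remark) and linear-time selection gives $O(d+p)$. A minor additional slip: checking the four sandwich inequalities is $O(d+p)$, not $O(1)$, since $a^T\hat y$ must be evaluated (this does not affect the claimed bound).
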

\begin{proof}
To check if $(\hat{y}, \hat{w}) \in \conv(Q)$, we first check if the point satisfies the first two sets of inequalities in $O(d + p)$ time. Note that $(\hat{y}, \hat{w})$ cannot violate the last two exponential sets of inequalities at the same time. Therefore, if $\hat{w} > 0$, we check if $\hat{w} \le \min_{(I, o) \in \mathcal{I}^{+}}\sum_{i \in I} a_i(\hat{y}_i - \ell^0_i) + (\ell_I - \lambda)/(u^0_o - \ell^0_o) (\hat{y}_o - \ell^0_o)$. 
Otherwise, if $\hat{w} < 0$, we check if $\hat{w} \ge \max_{(I, o) \in \mathcal{I}^-} \sum_{i \in I} a_i(\hat{y}_i - u^0_i) + (u_I + \lambda)/(\ell^0_o - u^0_o) (\hat{y}_o - u^0_o)$.
Both minimization and maximization problems can be theoretically solved in $O(n+p)$ time~\cite[Prop. 2]{tjandraatmadjaConvexRelaxationBarrier2020}. If the conditions are satisfied, then $(\hat{y}, \hat{w}) \in \conv(Q)$. Otherwise, an optimal solution to these subproblems provides the most violated inequality. 
\ifipco\qed\fi
\end{proof}
\ifsiam\else\begin{remark}\fi
We can also solve the separation problem in $O((d+p)\log(d+p))$ time by sorting the elements in nondecreasing order of $(\hat{y}_i - \ell^0_i)/(u^0_i - \ell^0_i)$ (resp.\ $(\hat{y}_i - u^0_i)/(\ell^0_i - u^0_i)$) and by iteratively including them to the set $I$ until $\ell_I - \lambda < 0$ (resp. $u_I + \lambda > 0$). The last set $I$ and index $o$ that triggers the change of sign gives the most violated inequality for $\conv(Q)$~\cite[Prop. 2]{tjandraatmadjaConvexRelaxationBarrier2020}.
\ifsiam\else\end{remark}\fi
These steps appear when combining affine steps and proximal operators of nonsmooth functions. For example, the proximal gradient method with step size $\eta> 0$ applied to a Lasso~\cite{lasso} function of the form $f(z, x) = \|Dz - x\|_2^2 + \lambda \|z\|_1$ with $D\in \reals^{p \times n}$ becomes the \emph{iterative shrinkage thresholding algorithm} (ISTA)~\cite[Ch.\ 10]{beckFirstOrderMethodsOptimization2017} with iterations defined as $s^{k+1} = \mathcal{T}_{\lambda \eta} ( (I - \eta D^TD) s^{k} + \eta D^Tx)$, where $\mathcal{T}_{\lambda \eta}$ applies the soft-thresholding operator elementwise. Therefore, with $v = s^k$, we can write ISTA iterations as
\begin{equation}\label{eq:istastep}
    (\phi(v, x))_i = \mathcal{T}_{\lambda \eta} ( ( (I - \eta D^TD) v + \eta D^Tx)_i),
\end{equation}
which corresponds to equation~\eqref{eq:soft-thresh} with $a$ being the $i$th row of $[I\!-\!\eta D^T\!D\;\; \eta D^T]$.

\paragraph{Saturated linear unit}
Given two scalars $b, c \in \reals$, we consider the composition of an affine function and the \emph{saturated linear unit} with the form
\begin{equation}
    \label{eq:sat-lin}
   \!\!\!\!(\phi(v, x))_i = \mathcal{S}_{[b, c]}(a^Ty) =
   \begin{cases}
        b & {a}^T {y} < b \\
        a^T y & b \le a^T y \le c \\
        c  & {a}^T {y} > c
   \end{cases}\quad \text{with}\quad y = (v, x).
\end{equation}
For the big-$M$ formulation, we again introduce binary variables $\omega, \zeta \in \{0, 1\}$ arising from the three cases, and assume that the constants from~\eqref{eq:bigM} are $M^- < b$ and $M^+ > c$, otherwise we can prune some binary variables.
We can write~\eqref{eq:sat-lin} as
\begin{align*}
b \le w &\le c \\
c - (c-b)(1- \omega) \le w &\le b + (c - b)(1-\zeta)\\
a^Ty - (M^+ - M^-)\omega\le w &\le a^Ty + (M^+-M^-)\zeta\\
         c + (M^- - c)(1-\omega) \le a^Ty &\le c + (M^+ - c)\omega \\
         b + (M^- - b)\zeta \leq a^Ty &\leq b + (M^+ - b)(1-\zeta).
\end{align*}
Similarly to the soft-thresholding case, the saturated linear unit is a nonconvex monotonically nondecreasing function, so we again extend the results of~\cite[Theorem 1]{tjandraatmadjaConvexRelaxationBarrier2020} and construct the convex hull of~\eqref{eq:sat-lin}.
Although we use a technique similar to that in the soft-thresholding case, we need to slightly redefine the variables from before, as shown in Theorem~\ref{thm:satlin_convhull}.
\begin{theorem}[Convex hull of saturated linear unit]\label{thm:satlin_convhull}
For scalars $b, c \in \reals$, and variable $y \in \reals^d$, consider the region 
\begin{equation*}
    \Phi = \left\{(y, w) \in [\ubar{y}, \bar{y}] \times [b, c] \mid w = \mathcal{S}_{[b, c]}(a^T y)\right\}.
\end{equation*}
Let $\ell_I, u_I, \ell_i^0, u_i^0, J$ be defined in the same manner as for the soft-thresholding operator in Theorem~\ref{thm:softthreshold_convhull}, but redefine the index sets $\mathcal{I}^{+} = \{(I, o) \in 2^{J} \times J \mid \ell_I  \ge b, \; \ell_{I \cup \{o\}} < b\}$ and $\mathcal{I}^{-} = \{(I, o) \in 2^{J} \times J \mid u_I  \le c, \; u_{I \cup \{o\}} > c\}$.
We can express $\conv(\Phi)$ as
\begin{equation*}
    \conv(\Phi) = \left\{ (y, w) \in [\ubar{y}, \bar{y}] \times [b, c] \;\middle|\; \begin{cases}
        w = b & u_J < b\\
        w = c & \ell_J > c\\
        w = a^T y & b \le u_\emptyset \le \ell_\emptyset \le c \\
        (y, w) \in Q & \text{otherwise}
    \end{cases}
    \right\},
\end{equation*}
where
\begin{equation*}
    Q = \left\{ (y, w) \in \reals^{p+d+1} \! \;\middle|\; \!\!\!\!\!\!\!\! \begin{array}{ll}
    &b\le w \le c \\
    &w \le \begin{cases} \frac{c-b}{c-\ell_J}(a^T y - \ell_J) + b & \ell_J < b \\ a^Ty & \ell_J \ge b \end{cases}\\
    &w \ge \begin{cases} \frac{b-c}{b - u_J}(a^T y - u_J) + c & u_J > c \\ a^Ty & u_J \le c \end{cases}\\
    &w \le \sum_{i \in I} a_i(y_i - \ell^0_i) + \frac{\ell_I - b}{u^0_o - \ell^0_o} (y_o - \ell^0_o) + b,\; \forall (I, o) \in \mathcal{I}^+\\
    &w \ge \sum_{i \in I} a_i(y_i - u^0_i) + \frac{u_I-c}{\ell^0_o - u^0_o} (y_o - u^0_o) + c,\; \forall (I, o) \in \mathcal{I}^-
    \end{array}\!\!\!
    \right\}.
\end{equation*}
\end{theorem}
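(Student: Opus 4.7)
The plan is to mirror the strategy used in the proof of Theorem~\ref{thm:softthreshold_convhull}, namely to decompose the saturated linear unit as an affine combination of two rectified linear units and then invoke the ReLU convex hull result of Theorem~\ref{thm:relu_convhull} on each piece. Concretely, I would start from the identity
\begin{equation*}
    \mathcal{S}_{[b,c]}(a^T y) \;=\; b \;+\; \ReLU(a^T y - b) \;-\; \ReLU(a^T y - c),
\end{equation*}
which one verifies by checking the three regimes $a^T y < b$, $b \le a^T y \le c$, and $a^T y > c$. Writing $w = b + w^+ - w^-$ with $w^+ = \ReLU(a^Ty - b)$ and $w^- = \ReLU(a^Ty - c)$, together with the box constraints $\ubar{y} \le y \le \bar{y}$, expresses $\Phi$ as a graph obtained from a simple affine combination of two ReLU graphs that share the same argument $a^T y$.

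Next, I would apply Theorem~\ref{thm:relu_convhull} separately to the set $\{(y, w^+) : w^+ = \ReLU(a^T y - b), \ubar{y} \le y \le \bar{y}\}$ and to $\{(y, w^-) : w^- = \ReLU(a^T y - c), \ubar{y} \le y \le \bar{y}\}$. For the first, the relevant thresholds become $\ell_I - b$ and $u_I - b$ (since the ReLU argument is shifted by $-b$), which gives precisely the index set $\mathcal{I}^+$ defined in the statement and the family of upper bounds $w^+ \le \sum_{i\in I} a_i(y_i - \ell^0_i) + \tfrac{\ell_I - b}{u^0_o - \ell^0_o}(y_o - \ell^0_o)$ for $(I,o) \in \mathcal{I}^+$. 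Analogously, the convex hull of the second ReLU set yields the lower bounds of the form $w^- \le \sum_{i \in I} a_i(y_i - u^0_i) + \tfrac{u_I - c}{\ell^0_o - u^0_o}(y_o - u^0_o)$ for $(I,o) \in \mathcal{I}^-$, together with the pair of secant inequalities relating $w^\pm$ to $a^T y - b$ and $a^T y - c$. The diagonal secants translate, after the substitution $w = b + w^+ - w^-$, to the two bounds
\begin{equation*}
    \tfrac{b-c}{b - u_J}(a^T y - u_J) + c \;\le\; w \;\le\; \tfrac{c-b}{c-\ell_J}(a^T y - \ell_J) + b,
\end{equation*}
and the tight linear inequalities on $w^+$ and $w^-$ translate to the inequalities in $Q$ parametrized by $\mathcal{I}^+$ and $\mathcal{I}^-$. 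The degenerate cases $u_J < b$, $\ell_J > c$, and $b \le \ell_J \le u_J \le c$ correspond to one or both ReLUs being identically zero or identically affine, recovering the three special rows of the statement.

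The main obstacle is justifying that combining the two per-ReLU convex hulls via the linear map $w = b + w^+ - w^-$ yields exactly $\conv(\Phi)$, rather than merely a valid relaxation. The subtlety is that $w^+$ and $w^-$ share the argument $a^T y$ but are never simultaneously active: $w^+ > 0$ forces $a^T y > b$ and $w^- > 0$ forces $a^T y > c$, so at any vertex of $\Phi$ at most one of them is nonzero. I would argue as in the soft-thresholding proof that this disjointness lets the projection of the product of the two lifted convex hulls coincide with $\conv(\Phi)$, and then verify that every extreme point of the claimed polyhedron is attained by some $y\in [\ubar y, \bar y]$ together with the corresponding value of $\mathcal{S}_{[b,c]}(a^T y)$. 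The remaining verification is routine: check that the constant shift by $b$ and the sign flip on $w^-$ exactly produce the coefficients $(\ell_I - b)/(u^0_o - \ell^0_o)$ and $(u_I - c)/(\ell^0_o - u^0_o)$ appearing in $Q$, and that the degenerate rows of the convex hull description are recovered when the corresponding index sets become empty.
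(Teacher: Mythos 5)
Your decomposition $\mathcal{S}_{[b,c]}(a^Ty) = b + \ReLU(a^Ty - b) - \ReLU(a^Ty - c)$ is a correct identity, but the argument built on it has a genuine gap, and it starts with a false claim: the two ReLUs in your split are \emph{not} disjointly active. Whenever $a^Ty > c$ (e.g.\ at the vertex $y = u^0$ when $u_J > c$) both $w^+ = \ReLU(a^Ty-b) > 0$ and $w^- = \ReLU(a^Ty-c) > 0$, so the statement ``at any vertex of $\Phi$ at most one of them is nonzero'' fails. This is precisely where your case differs from the soft-thresholding decomposition, whose two ReLUs activate on opposite sides of the kink region. Without that disjointness, intersecting the two individually lifted hulls and projecting through $w = b + w^+ - w^-$ gives only a relaxation, and it is strictly weaker than $\conv(\Phi)$: take $d=1$, $a=1$, $y \in [0,10]$, $b=1$, $c=2$. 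At $y = 1.5$ the point $(w^+, w^-) = (1, 0)$ lies in both per-ReLU hulls (the upper secant of $\ReLU(y-1)$ over $[0,10]$ allows $w^+ \le 0.9\,y = 1.35$), giving $w = 2$, even after adding $b \le w \le c$; but $\conv(\Phi)$ requires $w \le \frac{c-b}{c-\ell_J}(y - \ell_J) + b = 1.75$ there. The same example shows your ``translation of the diagonal secants'' cannot work: the full-box ReLU hull of $\ReLU(a^Ty-b)$ produces secants with slope $(u_J-b)/(u_J-\ell_J)$, and no combination with the bounds on $w^-$ yields the chords through the kink points $(b,b)$ and $(c,c)$ with slopes $(c-b)/(c-\ell_J)$ and $(b-c)/(b-u_J)$ that appear in $Q$.

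The paper's proof avoids this by working with the composition $\mathcal{S}_{[b,c]}(a^Ty) = \min\{\max\{a^Ty, b\}, c\}$ rather than a difference of ReLUs with overlapping activation regions: it applies the ReLU hull to $\max\{a^Ty,b\} = \ReLU(a^Ty-b)+b$ to get the upper-bounding constraints (dropping the $w \ge a^Ty$ inequality, which the outer clipping at $c$ invalidates), then swaps the min and max and uses $\min\{a^Ty,c\} = -\ReLU(-a^Ty+c)+c$ for the lower-bounding constraints. The clipping is exactly what replaces $u_J$ and $\ell_J$ by $c$ and $b$ in the secant slopes — the same role the domain restrictions $a^Ty \ge -\lambda$ and $a^Ty \le \lambda$ play in the soft-thresholding proof. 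To repair your route you would need an analogous mechanism (e.g.\ a decomposition whose pieces are genuinely never simultaneously active, such as $a^Ty + \ReLU(b - a^Ty) - \ReLU(a^Ty - c)$, together with suitably restricted effective domains for each piece and an explicit argument that the combined lifted description projects onto $\conv(\Phi)$); as written, the exactness step and the derivation of the constraints in $Q$ both break down.
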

\begin{proof}
The operator~\eqref{eq:sat-lin} can be written as
\begin{equation}\label{eq:satlin_minmax}
    \mathcal{S}_{[b,c]}(a^Ty) = \min\{\max\{a^Ty, b\}, c\}.
\end{equation}
Line~1 of $Q$ follows from the pointwise bounds $b \le \mathcal{S}_{[b,c]}(a^Ty) \le c$.
When $\ell_J < b < c < u_J$, lines~1--3 of $Q$ together describe the concave and convex envelopes of $\mathcal{S}_{[b,c]}(a^Ty)$ over $[\ell_J, u_J]$.
The concave envelope consists of the hyperplane passing by $(\ell_J, b)$ and the line $a^Ty = c$ for $a^Ty \in [\ell_J, c]$ and $w \le c$ for $a^Ty \in [c, u_J]$, giving the upper bound in line~2.
The convex envelope consists of $w \ge b$ for $a^Ty \in [\ell_J, b]$ and the hyperplane passing by the line $a^Ty = b$ and $(u_J, c)$ for $a^Ty \in [b, u_J]$, giving the lower bound in line~3.
When only one threshold is crossed, the corresponding bound simplifies: if $\ell_J \ge b$, then $\mathcal{S}_{[b,c]}(a^Ty) = \min\{a^Ty, c\} \le a^Ty$; if $u_J \le c$, then $\mathcal{S}_{[b,c]}(a^Ty) = \max\{a^Ty, b\} \ge a^Ty$.
For the remaining inequalities, write $\max\{a^Ty, b\} = \ReLU(a^Ty - b) + b$ and apply Theorem~\ref{thm:relu_convhull} with threshold shifted by $b$.
Since the outer minimization with $c$ implies $w \le c \le \ReLU(a^Ty - b) + b$, the upper bounds from Theorem~\ref{thm:relu_convhull} yield line~4.
Similarly, writing $\min\{a^Ty, c\} = -\ReLU(-a^Ty + c) + c$ and applying Theorem~\ref{thm:relu_convhull} with coefficients $-a$ and threshold shifted by $c$ yields the lower bounds in line~5.
For sufficiency, we show that every extreme point $(\hat{y}, \hat{w})$ of $Q \cap [\ubar{y}, \bar{y}] \times [b,c]$ lies in $\Phi$.
At any extreme point, the Theorem~\ref{thm:relu_convhull} upper bounds on $\ReLU(a^Ty - b)$ and $\ReLU(-a^Ty + c)$ are tight~\cite[Theorem~1]{tjandraatmadjaConvexRelaxationBarrier2020}, so the tightest upper and lower bounds on $w$ from lines~1--5 both equal $\mathcal{S}_{[b,c]}(a^T\hat{y})$.
The envelope constraints (lines~2--3) only further restrict the feasible region, preserving this property.
\ifipco\qed\fi
\end{proof}
\ifsiam\else\begin{remark}\fi
    The separation procedure is similar to the one from the soft-thresholding case, with the only difference coming from the differing constants in the two index sets.
\ifsiam\else\end{remark}\fi

These steps appear when composing affine steps and projections onto boxes.
For example, consider a QP of the form~\eqref{eq:prob} with $q = x$, $A=-I$, $b(x) = 0$, and $C = [b, c]$ for $b, c \in \reals^n$.
Similarly to the ReLU case, a box-constrained QP of this form can be solved again with projected gradient descent with step size $\eta > 0$ with iterations $s^{k+1} = \mathcal{S}_{[b, c]}((I-\eta P)s^k - \eta x)$, where $\mathcal{S}_{[b, c]}$ applies~\eqref{eq:sat-lin} elementwise.
Therefore, with $v = s^k$, we can write projected gradient descent iterations as
\begin{equation}
    (\phi(v, x))_i = \mathcal{S}_{[b_i, c_i]}\left(((I-\eta P)v - \eta x)_i\right),
\end{equation}
which corresponds to equation~\eqref{eq:sat-lin} with $a$ being the $i$th row of $[I\!-\!\eta P\;\; -\eta I]$.

\subsection{Vanilla MILP formulation for ISTA}\label{ex:vanillamilp}
Consider the problem of recovering a sparse vector $z^\star$ from its noisy linear measurements $x = Dz^\star + \varepsilon \in \reals^p$, where $D \in \reals^{p \times n}$ is the dictionary matrix and $\varepsilon \in \reals^p$.
A popular approach is to model this problem using Lasso~\cite{lasso} formulation with sparsity-inducing regularization parameter $\lambda > 0$, \ie,
\begin{equation}\label{prob:lasso}
	\begin{array}{ll}
		\text{minimize} & (1/2)\|Dz - x\|_2^2 + \lambda \| z \|_1.
	\end{array}
\end{equation}
The sparse coding problem~\eqref{prob:lasso} can be written as
\begin{equation}\label{eq:lasso_qpprob}
    \begin{array}{ll}
        \text{minimize} & (1/2)z^TD^TDz -(D^Tx)^Tz + \lambda \ones^T \mu\\
        \text{subject to} & -\mu \le z \le \mu,
    \end{array}
\end{equation}
which corresponds to problem~\eqref{eq:prob} with 
\begin{equation*}
    P = \begin{bmatrix} D^TD  & \\ & 0\end{bmatrix} \in \reals^{2n \times 2n},\quad A = \begin{bmatrix}I &-I\\ -I & -I\end{bmatrix} \in \reals^{2n \times 2n}, 
\end{equation*}
$q(x) = (-D^\tpose x, \lambda \ones) \in \reals^{2n}$, $b(x) = 0 \in \reals^{2n}$, and $C = \reals_+^{2n}$.
We show (Appendix~\ref{apx:Hmatrix_proofs}) that $H = -(D^TD - \eta^{-1}I)$ is the scaling matrix corresponding to the optimality conditions.

We choose $p=15$, $n=20, \lambda=10^{-1}$, and, similarly to~\cite[Sec.\ 4.1]{chenTheoreticalLinearConvergence2018}, we sample the entries of $D$ i.i.d.\ from the Gaussian distribution $\mathcal{N}(0, 1/m)$ with $20\%$ nonzeros and normalize its columns.
We generate 100 sparse vectors $z^\star$ with $z^\star_i \sim \mathcal{N}(0, 1)$ with probability 0.1 and otherwise $z^\star_i = 0$. We chose $X$ to be the tightest hypercube containing all samples.
We set  $S = \{s^0\}$ where $s^0$ is the solution of the unregularized problem ($\lambda = 0$) for a random sample. 
We solve this problem with ISTA iterations of the form~\eqref{eq:istastep} with step size $\eta = 1/L$ where $L$ is the largest eigenvalue of $D^TD$.
\begin{figure}[t!]
    \centering
    \ifipco\vspace{-1em}\fi
    \includegraphics[width=0.85\textwidth]{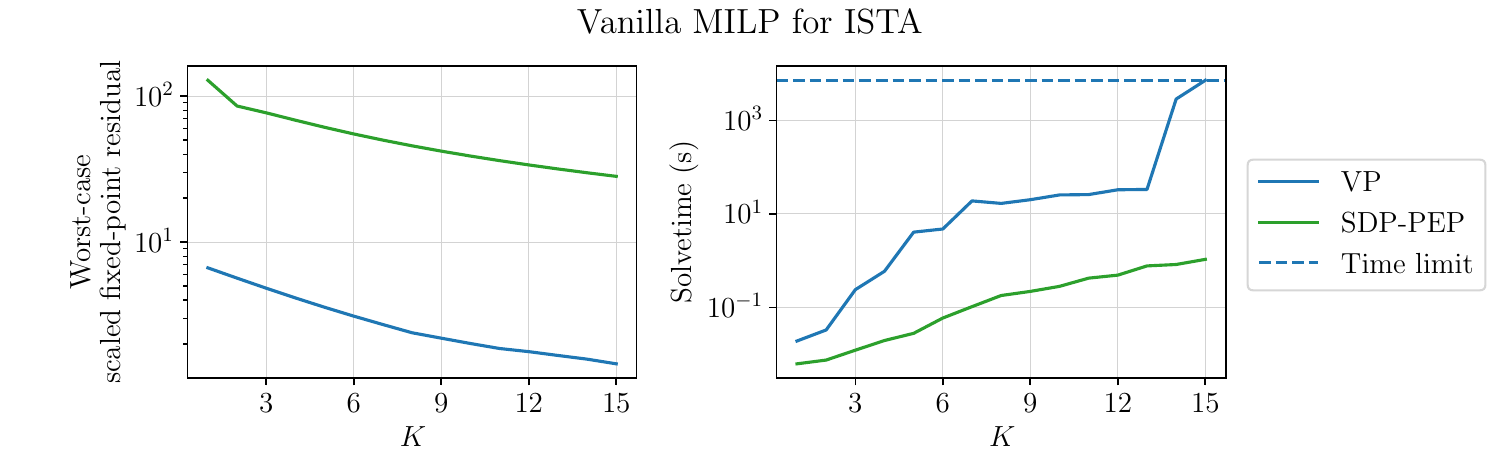}
    \caption{Left: Worst-case fixed-point residual norm for ISTA applied to~\eqref{prob:lasso} via VP and PEP. Right: Computation time to solve the respective optimization problems. }
    \label{fig:vanilla_milp_results}
    \ifipco \vspace{-1em}\fi
\end{figure}
Figure~\ref{fig:vanilla_milp_results} shows that our formulation offers almost two orders of magnitude reduction in worst-case fixed-point residual, compared to PEP. However, without any scalability improvement (see next section), we are unable to solve such problems beyond $K=15$ within the two hour time limit.

\section{Scalability improvements}\label{sec:scalability}
In this section, we introduce scalability improvements that allow us to solve~\eqref{eq:vp} for larger values of~$K$.
Modern MILP solvers implement a branch-and-bound (B\&B) algorithm~\cite{scip,gurobi,cplex20} with cutting planes, \ie, branch-and-cut.
The B\&B algorithm systematically partitions the feasible region into sub-regions and obtains bounds via the solution of the associated LP relaxations.
By tracking the best known primal and dual bounds, the B\&B algorithm can prune out suboptimal regions of the feasible space.
The practical efficiency of B\&B solvers is heavily reliant on the tightness of the LP relaxations. 
These can be strengthened by tightening bounds on the continuous variables, such as the big-$M$ constraints introduced in Section~\ref{subsec:piecewise_affine_steps}, or by adding valid linear inequalities, \ie, cutting planes.

We now introduce various improvements that include a combination of bound tightening techniques using standard interval propagation, optimization-based bound tightening, and operator-theory bounds specific to our setting.
Furthermore, we make use of the convex hull inequalities from Section~\ref{subsec:piecewise_affine_steps} to tighten the relaxations at the root node of the B\&B procedure.
Finally, we describe how we solve~\eqref{eq:vp} \emph{sequentially} from iteration $1$ to $K$, by iteratively exploiting the solutions at previous iterations. 

\subsection{Bound tightening}\label{subsec:bound_tightening}
\paragraph{Interval propagation}
Consider an affine step of the form~\eqref{eq:affinestep}.
Using the factorization of $M$, we can convert the affine step to an explicit update $w = \tilde{B} y$ where $\tilde{B} = M^{-1} B$.
Given lower and upper bounds $\ubar{y}, \bar{y}$, we can derive lower and upper bounds on $w$ \cite[Sec.\ 3]{gowalScalableVerifiedTraining2019} via $\ubar{w} = (1/2)( \tilde{B}(\bar{y} + \ubar{y}) - |\tilde{B}|(\bar{y} - \ubar{y}) )$, and $\bar{w} =  (1/2)( \tilde{B}(\bar{y} + \ubar{y}) + |\tilde{B}|(\bar{y} - \ubar{y}))$, where $|\tilde{B}|$ is the elementwise absolute value of $\tilde{B}$.
Consider now a piecewise affine step of the form~\eqref{eq:pwa_steps}. Since it is monotonically nondecreasing in its arguments, 
we can write bounds on $w$ as $\ubar{w} = \phi(\ubar{v}, x)$ and~$\bar{w} = \phi(\bar{v}, x)$.
Chaining bounds for all iterations~\eqref{eq:iterations} gives interval propagation bounds $\ubar{s}_{\rm ip}^K \le s^K \le \bar{s}_{\rm ip}^{K}$. 

\paragraph{Operator theory based bounds}
For a convergent operator $T$, the fixed-point residual is bounded by a decreasing sequence $\{\alpha_k\}_{k=1}^{\infty}$, \ie, 
$\| s^{k} - s^{k-1} \|_\infty \leq \alpha_k.$
\begin{proposition}\label{prop:fp_bound_prop}
    Let $\|s^{k} - s^{k-1} \|_\infty \leq \alpha_k$ and $\ubar{s}^{k-1} \le s^{k-1} \le \bar{s}^{k-1}$. Then, $ \ubar{s}^{k-1} - \alpha_k \ones \leq s^{k} \leq \bar{s}^{k-1} + \alpha_k \ones$.
\end{proposition}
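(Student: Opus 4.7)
The plan is to unpack the $\ell_\infty$-norm bound componentwise and then combine it with the iterate bounds using a simple triangle-style argument.

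First, I would observe that $\|s^k - s^{k-1}\|_\infty \le \alpha_k$ is by definition equivalent to the componentwise inequalities $|s^k_i - s^{k-1}_i| \le \alpha_k$ for every coordinate $i = 1, \dots, d$. Splitting the absolute value, this yields the two-sided bound $s^{k-1}_i - \alpha_k \le s^k_i \le s^{k-1}_i + \alpha_k$.

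Next, I would chain these inequalities with the hypothesized bounds $\ubar{s}^{k-1}_i \le s^{k-1}_i \le \bar{s}^{k-1}_i$. For the upper bound, I combine $s^k_i \le s^{k-1}_i + \alpha_k$ with $s^{k-1}_i \le \bar{s}^{k-1}_i$ to get $s^k_i \le \bar{s}^{k-1}_i + \alpha_k$. Symmetrically for the lower bound, I use $s^k_i \ge s^{k-1}_i - \alpha_k$ together with $s^{k-1}_i \ge \ubar{s}^{k-1}_i$ to conclude $s^k_i \ge \ubar{s}^{k-1}_i - \alpha_k$. Since this holds for every coordinate $i$, it holds as a vector inequality, giving the claim.

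There is no real obstacle here; the statement is essentially the vectorized triangle inequality applied to $s^k = s^{k-1} + (s^k - s^{k-1})$, where the second term is uniformly bounded in magnitude by $\alpha_k$ and the first is bounded by the interval $[\ubar{s}^{k-1}, \bar{s}^{k-1}]$. The only subtlety worth mentioning is that the bound $\alpha_k$ applies uniformly across all coordinates because it comes from an $\ell_\infty$-norm, which is what allows us to add and subtract the scalar $\alpha_k$ to the vector bounds $\bar{s}^{k-1}$ and $\ubar{s}^{k-1}$ without any coordinate-dependent adjustments.
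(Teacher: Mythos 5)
Your proposal is correct and follows essentially the same argument as the paper: unpack the $\ell_\infty$-norm into componentwise absolute-value bounds, split them into two one-sided inequalities, and substitute the upper and lower bounds on $s^{k-1}_i$ to conclude. Nothing further is needed.
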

\begin{proof}
    Since $\|s^{k} - s^{k-1} \|_\infty \leq \alpha_k$, we have $|s_i^{k} - s_i^{k-1}| \leq \alpha_k$. 
The absolute value implies $s_i^{k} - s_i^{k-1} \leq \alpha_k$ and $s_i^{k-1} - s_i^{k} \leq \alpha_k$.
    Replacing $s_i^{k-1}$ by its upper and lower bounds, respectively, completes the proof.
    \ifipco\qed\fi
\end{proof}
In theoretical analysis of first-order methods, the bounding sequence $\alpha_k$ depends on the worst-case initial distance to optimality that we denote by
\begin{equation}\label{prob:miqp_r2}
    \begin{array}{lll}
        R^2 = &\text{maximize} & \| s^0 - s^\star \|^2_2 \\
        &\text{subject to} &  x \in X,~s^\star \in S^\star(x),
    \end{array}
\end{equation}
where $S^\star(x)$ is the set of (primal-dual) optimal solutions for instance $x$.
Consider a $\beta$-contractive operator $T(\cdot, x)$ for any $x \in X$ with $\beta < 1$, which corresponds to a \emph{linearly} convergent method.
The fixed-point residual is bounded by 
$\alpha_k = 2\beta^{k-1}R$~\cite[Sec.\ B.3]{sambharyaLearningWarmStartFixedPoint2024}.
Consider now an averaged operator $T(\cdot, x)$ for any $x \in X$, which corresponds to a sublinearly convergent method.
In this case, the residual is bounded by $\alpha_k = (D/k^q)R$ for some $q > 0$ and $D$ is a constant independent of $K$~\cite{ryuLargeScaleConvexOptimization2022}.
Using the fact that any vector $\ell_2$-norm is bounded above by its $\ell_1$-norm, we formulate a MILP analogous to that of problem~\eqref{eq:vp} using techniques from Section~\ref{sec:milp_formulation} to compute
\begin{equation}\label{prob:initrad_milp}
    \begin{array}{lll}
        R = &\text{maximize} & \| s^0 - s^\star \|_1 \\
        &\text{subject to} &  x \in X,~s^\star \in S^\star(x),
    \end{array}
\end{equation}
over variables $s^\star \in \reals^d$ and $x \in \reals^p$.
These results give the operator theory based bounds $\ubar{s}_{\rm ot}^k = \ubar{s}^{k-1} - \alpha_k \ones \le s^{k} \le \bar{s}_{\rm ot}^{k} = \bar{s}^{k-1} + \alpha_k \ones$.

We apply both interval propagation techniques and operator theory bounds and use the tightest elementwise, \ie, set the bounds as 
\begin{equation}\label{eq:tightest_preprocess_bound}
    \ubar{s}^k = \max\{\ubar{s}^k_{\text{ip}}, \ubar{s}^k_{\text{ot}}\}, \quad \bar{s}^k = \min\{\bar{s}^k_{\text{ip}}, \bar{s}^k_{\text{ot}}\}.
\end{equation}

\paragraph{Optimization-based bound tightening (OBBT)}
In practice, we tighten bounds~$\ubar{s}^K$ and $\bar{s}^K$ obtained in~\eqref{eq:tightest_preprocess_bound} with optimization-based bound tightening \cite{Gleixner2017},
by solving the following LP for each component of $s^K$
\begin{equation}\label{prob:optboundtightLP}
	\!\!\!\!\!\begin{array}{ll}
		\text{min(max)imize} & s^{K}_i\\
		\text{subject to} & (s^{k+1}, s^k)\in \conv(\{s^{k+1} = T(s^k, x)\}),\quad k = 0,\dots,K-1\\
            & s^{k} \in [\ubar{s}^k, \bar{s}^k],\quad k=0,\dots,K.
	\end{array}
\end{equation}
OBBT plays a crucial role in tightening the bounds of the $y$ variables in the definition of $M^+$ and $M^-$ in ~\eqref{eq:bigM} and leads to a tighter LP relaxation.
Without OBBT, we observe the big-$M$ values are large and the MILPs do not scale to larger $K$.

\subsection{Warm-starting between consecutive iterations}
In order for the B\&B algorithm to efficiently prune out suboptimal regions of the feasible space, it needs to exploit quality primal heuristics, which are schemes that aim to quickly provide a feasible solution~\cite{bertholdPrimalHeuristicsInteger2025}\cite[Chapter 9.2]{confortiIntegerProgramming2014}\cite[Section 3.1]{scavuzzoMachineLearningAugmented2024}.
In our setting, after solving the MILP at iteration $K$, we can extract the worst-case instance sequence of iterates $\{s^k\}_{k=0}^K$ and parameter $x$ that achieves the objective value.
We find a quality heuristic solution for the next iterate $K+1$ by applying one extra step of operator $T(\cdot, x)$ and inferring the binary variable values from it.
In practice, we found this procedure allowed the MILP to quickly find the optimal lower bound in the B\&B procedure.
\subsection{Cutting planes at the root node}\label{subsec:cutting_planes}
The theorems from Section~\ref{subsec:piecewise_affine_steps} provide exponentially many inequalities to exactly describe the convex hulls.
However, Proposition~\ref{prop:separation_procedure} gives an efficient procedure by which we separate from the hulls, giving a cutting plane.
We tighten the root node continuous relaxation with these cutting planes by running the separation procedure for each output component of the piecewise affine steps.
In our implementation, we add only the most violated cutting plane, if any exists, to avoid overly increasing the size of the LP relaxations, which may slow down the B\&B execution~\cite{scavuzzoMachineLearningAugmented2024}.

\section{Numerical experiments}\label{sec:experiments}
Our goal is to compare to the best known non-asymptotic convergence bounds.
State-of-the-art computer-assisted analysis techniques, such as the PEP framework, give the best known results of this type for a wide range of function classes.
These are guaranteed to match the known asymptotic rates, and in some cases even improve their constants \cite{bousselmiInterpolationConditionsLinear2024,guTightSublinearConvergence2020,upadhyayaAutomatedTightLyapunov2025}.

We compare four baselines:
\begin{itemize}
    \item \textbf{Sample maximum (SM).}
    We sample $N=500$ problems indexed from the parameter set by $\{x^i\}_{i=1}^{N} \in X$. For each sample, we then run the first-order method for $k = 1,\dots, K$ iterations and compute the maximum $\ell_{\infty}$-norm of the fixed-point residual.
    The sample maximum is a lower bound on the objective value of~\eqref{eq:vp}.    
    \item \textbf{Performance Estimation Problem SDP (SDP-PEP).} We compute the worst-case objective from the PEP framework, which requires an upper bound $R$ to the initial distance to optimality.
    We compute~$R$ as in equation~\eqref{prob:miqp_r2}, by using the squared $\ell_2$-norm in the objective and obtaining a mixed-integer QP.
When forming the PEP problems, we use the methods based on operating splitting techniques, quadratic function classes, and linear operators~\cite{ryuOperatorSplittingPerformance2020, bousselmiInterpolationConditionsLinear2024}.
\item \textbf{$\ell_\infty$ scaled PEP (Scaled SDP-PEP).} 
As discussed in Section~\ref{sec:pep_comparison}, PEP is not built to work with the $\ell_\infty$ norm due to its dimension independence\footnote{SDP-PEP often has an optimal solution with a rank-1 Gram matrix (for example,~\cite[Section 3.6]{taylorSmoothStronglyConvex2017}), which yields a 1-dimensional worst-case function. In that case, the $\ell_\infty$ and $\ell_2$ norms of the performance measure are equal. However, such behavior does not hold in general.}.
To make the comparisons fair, we also report the worst-case SDP-PEP objective scaled by $1/\sqrt{d}$, using the fact that for any $v \in \reals^d$, $\|v\|_\infty \ge 1/\sqrt{d} \|v\|_2$.
\item \textbf{Verification Problem (VP).} We solve the MILP for the VP within a 5\% optimality gap with a 2-hour time limit.
    We solve each MILP sequentially (see Section~\ref{sec:scalability}), and report the worst-case fixed-point residual norm.
\end{itemize}

For each experiment, we provide the scaling matrix $H$ that allows the scaled fixed-point residual to correspond to the KKT conditions of the underlying problem, with proofs in Appendix~\ref{apx:Hmatrix_proofs}.
For comparison, we also provide references to theoretical asymptotic convergence rates that we use to construct our operator theory bounds.
We also compute the solution times for solving the VP and PEP.
To observe the effects of~\eqref{eq:tightest_preprocess_bound}, for each $K$ we also count the fraction of iterate components where the operator theory bound is tighter than bound propagation, with plots found in Appendix~\ref{apx:op_theory_improvements}.
All examples are written in Python 3.13.
We solve the MILPs using Gurobi 13.0~\cite{gurobi} with duality gap tolerance $5\%$ and 20 CPU cores.
For PEP, we use the PEPit toolbox~\cite{goujaudPEPitComputerassistedWorstcase2024},  interfaced to Clarabel 0.11.1~\cite{goulartClarabelInteriorpointSolver2024}.

\ifdoubleblind\else
The code for all experiments can be found at
\begin{center}
{\tt \url{https://github.com/stellatogrp/mip_algo_verify}}.
\end{center}
\fi
\subsection{Sparse coding}\label{subsec:full_sparse_coding_experiment}
We use the sparse coding settings from Example~\ref{ex:vanillamilp}.

\paragraph{Algorithms}
We analyze both the ISTA shown by~\eqref{eq:istastep} as well as the fast iterative shrinkage thresholding algorithm (FISTA)~\cite{beckFastIterativeShrinkageThresholding2009}.
The FISTA iterations for problem~\eqref{prob:lasso} require an auxiliary sequence of iterates $w^k$ with 
$w^0 = u^0$ and a precomputed scalar sequence $\beta_{k+1} = (1 + \sqrt{1 + 4 \beta_k^2}) / 2$, with $\beta_0 = 1$~\cite{beckFastIterativeShrinkageThresholding2009}, namely
\begin{equation}\label{eq:fista_iter}
        \begin{array}{ll}
u^{k+1} &= \mathcal{T}_{\lambda \eta}((I-\eta D^TD)w^k + \eta D^T x) \\
w^{k+1} &= u^{k+1} + (\beta_k - 1)/\beta_{k+1} (u^{k+1} - u^k).
        \end{array}
    \end{equation}

\paragraph{Performance metric}
    The optimality condition of problem~\eqref{prob:lasso} can be written in terms of the subdifferential $\partial \lambda \|\cdot\|_1$ as $0 \in \left( \partial \lambda \|\cdot\|_1 \right) (u) + D^T (D u - x)$.
The right-hand side with $u = u^K$ of FISTA~\eqref{eq:fista_iter}, for $K \ge 2$, is equivalent to
    $H(s^K - s^{K-1})$ with $s^k = (u^k, u^{k-1})$
    and $H = - (D^TD - \eta^{-1}I)\begin{bmatrix}
        I & -(\beta_{K-2} - 1)/(\beta_{K-1}) I 
    \end{bmatrix}$.
Taking $\beta_k \equiv 1$ recovers the corresponding expression for ISTA~\eqref{eq:istastep} for all $K$.

\paragraph{Problem setup}
We compare algorithms on both strongly convex and nonstrongly convex examples.
For the strongly convex example, we choose $p=20,~n=15$ and for the nonstrongly convex example, we choose $p=15,~n=20$.
We use the same data generating procedure described in Example~\ref{ex:vanillamilp} and choose $\eta= 1/L$, where $L$ is the maximum eigenvalue of $D^TD$.
For the operator theory bounds, we apply results from \cite[Theorem 2.2, Theorem 4.2]{kimAnotherLookFast2018}.
These theorems show that the convergence rates for both ISTA and FISTA in terms of fixed-point residual decrease at rate $O(K^{-1})$.

\paragraph{Results}
Residual results are available in Figure~\ref{fig:lasso_full_resids}, timing results are available in Figure~\ref{fig:lasso_full_times} in Appendix~\ref{apx:experiment_times}, and operator theory bound improvements are available in Appendix~\ref{apx:op_theory_improvements}.
Our approach VP, in all cases, obtains two orders of magnitude reduction in the worst-case fixed-point residual norm over SDP-PEP.
\begin{figure}[t!]
    \centering
    \ifipco\vspace{-1em}\fi
    \includegraphics[width=0.8\textwidth]{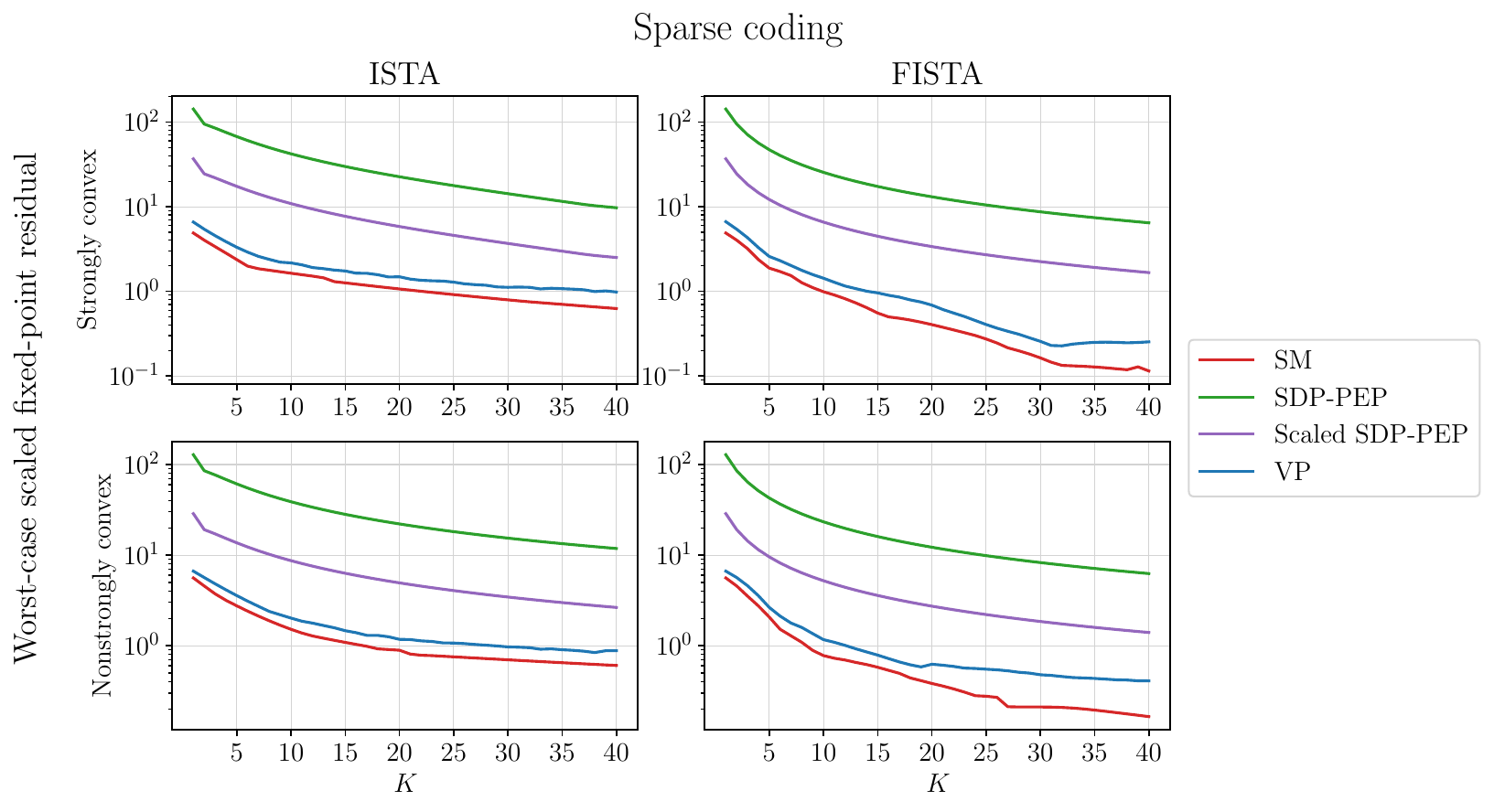}
    \caption{Worst-case fixed point residual norms for the ISTA (first column) and FISTA (second column) applied to~\eqref{prob:lasso}. In both the strongly convex case (first row, $p=20, n=15$) and the nonstrongly convex case (second row, $p=15, n=20$), the acceleration reduces the worst-case residual, with a larger gain in the nonstrongly convex case at $K=40$.}
    \label{fig:lasso_full_resids}
\ifipco \vspace{-1em}\fi
\end{figure}
To show the effect of our scalability improvements,
in Figure~\ref{fig:vanilla_improved_milp_time_comp}, we compare the solve times of the vanilla MILP formulation in Section~\ref{sec:milp_formulation} to the MILP with our improvements: those improvements allow the VP to scale up to $K=40$ within our time limit.\begin{figure}
    \centering
\includegraphics[width=0.55\textwidth]{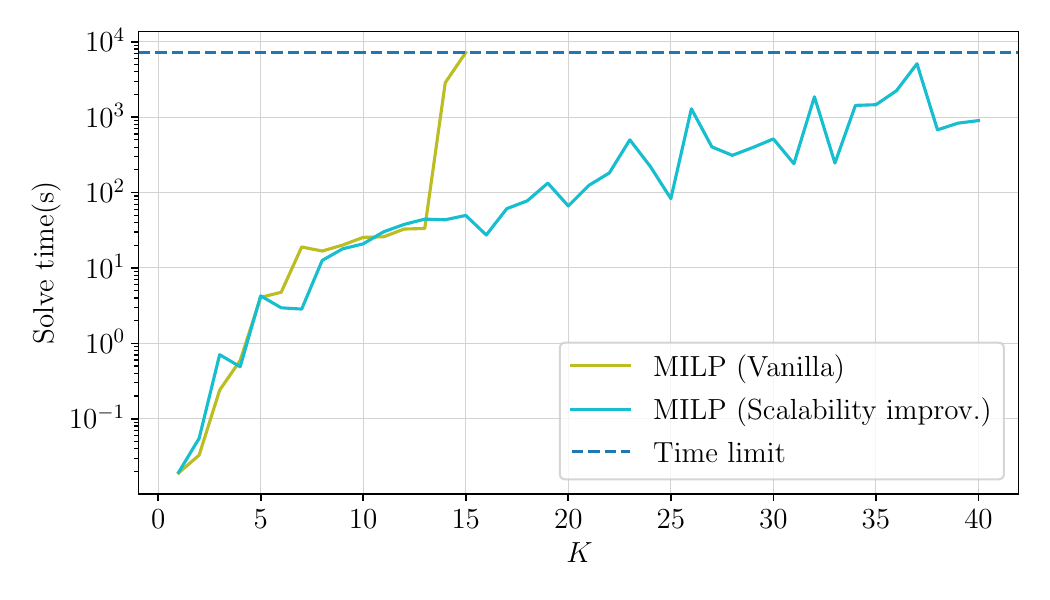}
    \caption{Solve times for the vanilla MILP defined in Section~\ref{sec:milp_formulation} vs.\ the MILP with our scalability improvements in Section~\ref{sec:scalability} on the same nonstrongly convex instance. }    \label{fig:vanilla_improved_milp_time_comp}
\ifipco \vspace{-1em}\fi
\end{figure}
\subsection{Network optimization}\label{subsec:flow_experiment}
We consider a minimum cost network flow problem~\cite{bertsekas1991linear} with varying node demands.
Given a set of $n_s$ supply nodes and $n_d$ demand nodes, consider a bipartite graph on the node groups with capacitated directed edges from supply to demand nodes.
Let $A_s, A_d$ be submatrices of rows corresponding to supply nodes and demand nodes, respectively.
For each edge $i$, let $c_i$ be the flow capacity, $\mu_i$ be the cost of sending a unit of flow, and $f_i$ be the variable representing the amount of flow sent.
Lastly, let $b_s \geq 0$ be the vector of supply amounts available at each supply node and $x \leq 0$ be the amount of demand requested at each demand node.
The minimum cost network flow problem becomes
\begin{equation}\label{prob:mincostflow}
	\begin{array}{ll}
		\text{minimize} & \mu^T f\\
		\text{subject to} 
            & A_s f \le b_s,\quad A_d f = x,\quad 0 \leq f \leq c,\\
\end{array}
\end{equation}
\ie~problem~\eqref{eq:prob} with 
$P = 0 \in \reals^{n_e\times n_e}$,
$A = [-\!I\; I \; A^T_s \; A^T_d]^T \in \reals^{(n_s + n_d + 2n_e) \times n_e }$,
~$q(x) = \mu \in \reals^{n_e}$, $b(x) = (0, c, b_s, x) \in \reals^{n_s + n_d + 2n_e}$, and $C = \reals_+^{n_s + 2n_e} \times \{0\}^{n_d}$.

\paragraph{Algorithms}
We solve the problem using the primal-dual hybrid gradient (PDHG) algorithm ~\cite{chambolleFirstOrderPrimalDualAlgorithm2011} that operates on a primal and dual iterate~$s^k = (z^k, v^k, w^k)$.
Note that $(v^k, w^k)$ are the separate components of the dual vector that correspond to the primal inequality and equality constraints, respectively.
We consider both the standard PDHG~\cite{chambolleFirstOrderPrimalDualAlgorithm2011} with fixed step size $\eta > 0$ adapted to LPs~\cite[Equation (3)]{applegatePracticalLargeScaleLinear2021}, 
\begin{equation}\label{eq:pdhg_vanilla}
	\begin{array}{ll}
		z^{k+1} & = \mathcal{S}_{[0, c]}(z^k - \eta (\mu + A_s^T v^k - A_d^Tw^k))\\
            v^{k+1} &= \max\{v^k + \eta(-b_s + A_s(2z^{k+1} - z^k)), 0\}\\
		w^{k+1} & = w^k + \eta(x - A_d(2z^{k+1} - z^k)).
    \end{array}
\end{equation}
We also consider a version with momentum in the primal variable:
\begin{equation}\label{eq:pdhg_momentum}
    \begin{array}{ll}
		z^{k+1} & = \mathcal{S}_{[0, c]}(z^k - \eta (\mu + A_s^T v^k - A_d^Tw^k))\\
            \tilde{z}^{k+1} &= z^{k+1} + \xi_k(z^{k+1} - z^k)\\
            v^{k+1} &= \max\{v^k + \eta(-b_s + A_s(2\tilde{z}^{k+1} - z^k)), 0\}\\
		w^{k+1} & = w^k + \eta(x - A_d(2\tilde{z}^{k+1} - z^k)),
    \end{array}
\end{equation}
where we determine the momentum coefficients with Nesterov's update rule $\xi_k = k/(k+3)$~\cite{nesterov1983method}.
For both algorithms, we choose the step size as $\eta = 0.5/\left\| [ -\!A^T_s \; A^T_d] \right\|_2$.

    \paragraph{Performance metric}
    The optimality condition of problem~\eqref{prob:mincostflow} is given by
    \begin{equation*}
        0 \in \begin{bmatrix}
            \mu + A_s^T v - A_d^T w + \normalcone_{[0, c]} (z) \\
            - (A_s z - b_s) + \normalcone_{\ge 0} (v) \\
            A_d z - x
        \end{bmatrix},
    \end{equation*}
    where $\normalcone_{[0, c]}$ and $\normalcone_{\ge0}$ refer to the normal cone of sets $[0, c]$ and nonnegative orthant~$\reals_+^{m_1}$, respectively.
    The right-hand side of the optimality condition at $s = s^K$ for $s^k = (z^k, v^k, w^k)$ of momentum PDHG~\eqref{eq:pdhg_momentum} is equivalent to $H (s^K - s^{K-1})$ with
    \begin{equation*}
        H = - \begin{bmatrix}
            \eta^{-1} I & - A_s^T & A_d^T \\
            - (1 + 2 \xi_{K-1}) A_s & \eta^{-1} I & 0 \\
            (1 + 2 \xi_{K-1}) A_d & 0 & \eta^{-1} I
        \end{bmatrix}.
    \end{equation*}
We recover a similar result for standard PDHG~\eqref{eq:pdhg_vanilla} when $\xi_k \equiv 0$.

\paragraph{Problem setup}
We generate a random bipartite graph on 15 supply nodes and 10 demand nodes by adding each edge with probability 0.5.
After generating the random graph and forming the LP, we obtain $A \in \reals^{m \times n}$ with $n_s=15, n_d=10, n_e=63$.
For this experiment, we keep a fixed supply $b_s = 10$ for each node and a fixed capacity $c = 5$ for each edge, and we sample the costs i.i.d.\ from a uniform distribution $\mu_i \sim \mathcal{U}[5, 10]$.
We parameterize only the demand as a unit hypercube $X = [-7, -6]^{n_d}$.
We choose $s^0$ to be the primal-dual solution to the problem instance where all demand nodes have maximum demand.
If this instance has a solution, then all instances in the family are feasible.
For the operator theory bounds, we apply~\cite[Theorem 1]{luGeometryRefinedRate2024}, which says that the fixed-point residual for both algorithms converges at rate $O(K^{-1/2})$.

\paragraph{Results}
Residual results are available in Figure~\ref{fig:pdhg_resids} and timing results are available in Figure~\ref{fig:pdhg_times} in Appendix~\ref{apx:experiment_times}.
In both cases,~\eqref{eq:vp} is able to capture that the worst-case fixed-point residual norm is not monotonically decreasing.
Our approach is able to show residual bounds for this LP family up to $K=70$ and benefits of momentum.
\begin{figure}[h]
    \centering
\includegraphics[width=0.8\textwidth]{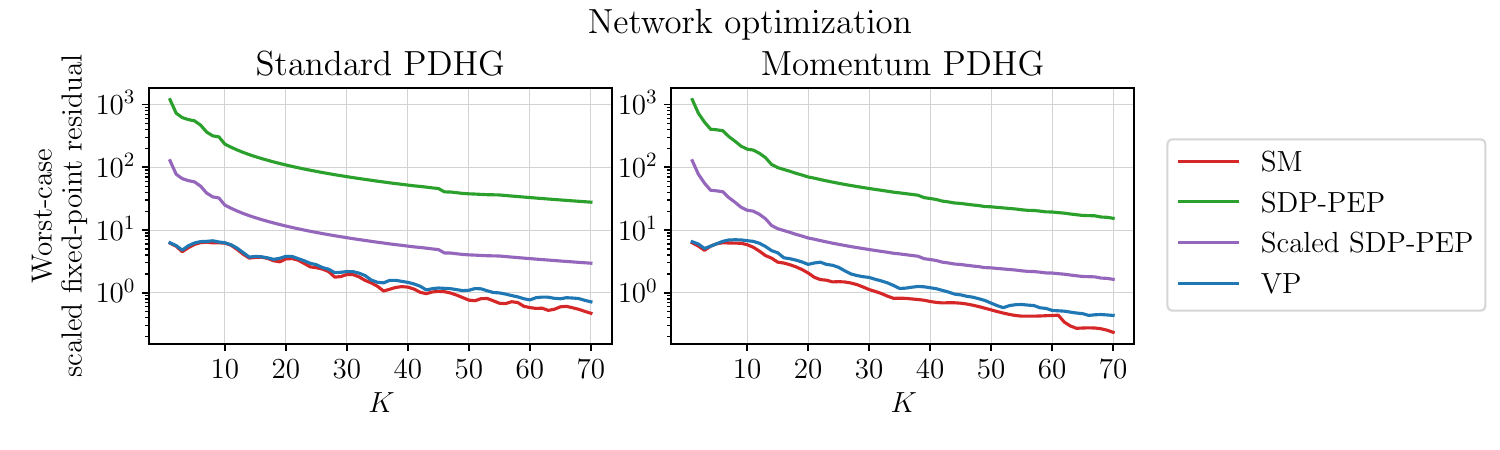}
    \caption{Worst-case fixed-point residual norms for the standard PDHG algorithm (left) and the version with momentum (right). Both algorithms show a rippling behavior and the momentum algorithm obtains a lower residual at $K=70$. }
    \label{fig:pdhg_resids}
\ifipco \vspace{-1em}\fi
\end{figure}

\subsection{Optimal control}\label{subsec:control_experiment}
We consider the problem of controlling a constrained time-invariant linear dynamical system with $n_s$ states and $n_u$ inputs, over a finite time horizon indexed as $t = 1, \dots, T$~\cite[Chapter 11.3]{borrelliPredictiveControlLinear2017}.
Given dynamics matrices $A^{\text{dyn}} \in \reals^{n_s \times n_s}$ and $B^{\text{dyn}} \in \reals^{n_s \times n_u}$, along with positive semidefinite state cost matrix $Q \in \symm_{+}^{n_s \times n_s}$, positive definite input cost matrix~$R \in \symm_{++}^{n_u \times n_u}$, we aim to solve
\begin{equation}\label{prob:optimalcontrol}
	\begin{array}{ll}
		\text{minimize} & s_T^TQ_Ts_T + \sum_{t=1}^{T-1} s_t^T Qs_t + u_t^T R u_t\\
		\text{subject to} 
            & s_{t+1} = A^\text{dyn} s_t + B^\text{dyn} u_t, \quad t=1,\dots,T-1\\
            & s_\text{min} \leq s_t \leq s_\text{max}, \quad t=1,\dots,T\\
            & u_\text{min} \leq u_t \leq u_\text{max}, \quad t=1,\dots,T-1\\
            & s_1 = s^{\rm init},
	\end{array}\!\!\!\!\!
\end{equation}
where $s^{\rm init} \in \reals^{n_s}$ is the initial state. 
By defining a new variable $z = (u_1, \dots u_{T-1}) \in \reals^{(T-1)n_u}$, problem~\eqref{prob:optimalcontrol} can be written as function of inputs only~\cite[Section 11.3]{borrelliPredictiveControlLinear2017}, \begin{equation}\label{prob:oc_reformed}
	\begin{array}{ll}
		\text{minimize} & (1/2) z^T P z + q(x)^T z\\
		\text{subject to} 
            & l(x) \leq Mz \leq u(x),
	\end{array}
\end{equation}
where the parameter $x = s^{\rm init}$, and $l(x),\;u(x) \in \reals^{(T-1)n_u}$, $P \in \symm_{++}^{(T-1)n_u \times (T-1)n_u}$, and $M \in \reals^{ Tn_s + (T-1)n_u \times (T-1)n_u}$.
Problem~\eqref{prob:oc_reformed} can be written in the form~\eqref{eq:prob}, with $A = [M^T \; -\!M^T]^T \in \reals^{2 (T-1)n_u \times (T-1)n_u}$, $q(x) \in \reals^{(T-1)n_u}$, $b(x) = (l(x), -u(x)) \in \reals^{2(T-1)n_u}$, and $C = \reals^{2(T-1)n_u}_+$.

\paragraph{Algorithms}
We solve problem~\eqref{prob:oc_reformed} using the alternating direction method of multipliers~(ADMM) from the OSQP solver \cite{ banjacInfeasibilityDetectionAlternating2019,stellatoOSQPOperatorSplitting2020} given $\rho, \sigma > 0$ and primal-dual iterate $(z^0, v^0)$:
\begin{equation}\label{eq:admm_fixedpt}
    \begin{array}{l}
        w^{k+1} = \mathcal{S}_{[l(x), u(x)]}(v^k) \\
        \text{Solve } (P + \sigma I + \rho M^T M)z^{k+1} = \sigma z^k - q(x) + \rho M^T(2w^{k+1} - v^k)\\
        v^{k+1} = v^k + Mz^{k+1} - w^{k+1}.
    \end{array}
\end{equation}

\paragraph{Performance metric}
By defining $y^k = \rho (v^{k-1} - w^k)$ and by construction of algorithm~\eqref{eq:admm_fixedpt}, the pair $(w^k, y^k)$ can be shown to satisfy the relevant normal cone conditions of~\eqref{prob:optimalcontrol} at every iteration~\cite[Section 4.1]{banjacInfeasibilityDetectionAlternating2019}.
    So, the only optimality conditions of problem~\eqref{prob:oc_reformed} that we need to measure are primal and dual feasibility:
    \begin{equation*}
        0 = \begin{bmatrix}
            P z + M^T y + q(x) \\
            M z - w
        \end{bmatrix}.
    \end{equation*}
   The right-hand side at $s = s^K$ for $s^k = (z^k, v^k)$ is equivalent to $H (s^K - s^{K-1})$ with
    \begin{equation*}
        H = - \begin{bmatrix}
            - \sigma I & - \rho M^T \\
            0 & I
        \end{bmatrix}.
    \end{equation*}

\paragraph{Problem setup}
We consider the dynamics of a linearized quadcopter from~\cite{kouzoupisProperAssessmentQP2015} and a time horizon $T=5$.
The input and state cost matrices are $Q = Q_T = \diag(0, 0, 10, 10, 10, 10, 0, 0, 0, 5, 5, 5)$, and $R = 0.1 I$.
We consider a setting with only input constraints $u_{\rm min} = (-0.99) \ones$  and $u_{\rm max} = (2.41) \ones$ (\ie, $s_{\rm min} = - \infty \ones$ and $s_{\rm max} = +\infty \ones$).
We allow the initial state $x = s_{\rm init}$ to live in 
$X = \{ x \in \reals^{12} \mid -\pi/6 \le x_1, x_2 \le \pi/6,\; 0 \le x_3 \le 1, x_i = 0,\; i=4,\dots, 12\}$, \ie, hovering positions with altitude between $0$ and $1$.
We compare three versions of algorithm~\eqref{eq:admm_fixedpt} with $\rho=0.1, 1, 10$ and fixed $\sigma=10^{-4}$, and we use operator theory bounds from~\cite[Theorem 2]{giselssonLinearConvergenceMetric2017}.
This bound says that the convergence rate of the fixed-point residual decreases at rate $O(\tau^K)$, where $\tau = (1/2) + (1/2) \max\{(\rho L - 1)/(\rho L + 1), (1 - \rho\mu)/(\rho \mu + 1)\}$, with $\mu, L$, the minimum and maximum eigenvalues of $P$, respectively.

\paragraph{Results}
Residual results are available in Figure~\ref{fig:osqp_resids} and timing results are available in Figure~\ref{fig:mpc_times} in Appendix~\ref{apx:experiment_times}.
Our VP approach is able to quantify worst-case residual differences and be used to tune step size parameters.
\begin{figure}[h]
    \centering
\includegraphics[width=0.8\textwidth]{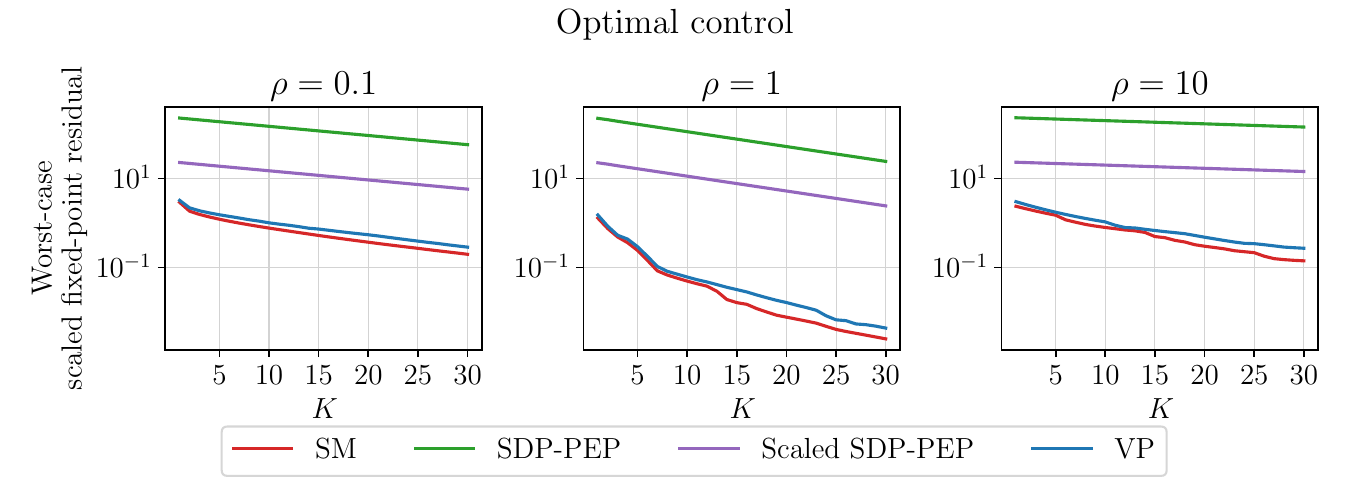}
\caption{Worst-case fixed-point residual norms for the OSQP algorithm differing step sizes $\rho$. The middle choice $\rho=1$ is able to achieve the lowest worst-case residual by $K=50$. Our VP is also able to show the overall slower convergence with $\rho = 0.1, 10$.}
    \label{fig:osqp_resids}
\ifipco \vspace{-1em}\fi
\end{figure}

\section{Conclusion}
We presented a new MILP framework to exactly verify the performance of first-order optimization algorithms for LPs and QPs.
We formulated the verification problem as an MILP by maximizing the $\ell_\infty$-norm of the convergence residual after a fixed number of iterations over a given parametric family of problems and set of initial iterates.
By constructing convex hull formulations, we represented a variety of piecewise affine steps in addition to directly embedding affine steps into the MILP.
We also incorporated many techniques to enhance the scalability of the MILP, specifically through a custom bound tightening procedure combining interval propagation, operator theory, and optimization-based bound tightening.
Using different examples, we benchmark our method against state-of-the-art computer-assisted performance estimation techniques.
While our analysis was restricted to QPs in this work, we believe it motivates similar studies in parametric convex settings.

\newcommand{\myack}{Vinit Ranjan and Bartolomeo Stellato are supported by the NSF CAREER Award ECCS-2239771. Jisun Park (affiliated with the Research Institute of Mathematics, Seoul National University) is supported by the National Research Foundation of Korea (NRF) grant funded by the Korea government (MSIT) (RS-2024-00353014). Bartolomeo Stellato and Jisun Park are also supported by the ONR YIP Award N000142512147. The authors are pleased to acknowledge that the work reported in this paper was substantially performed using the Princeton Research Computing resources at Princeton University which is a consortium of groups led by the Princeton Institute for Computational Science and Engineering (PICSciE) and Research Computing.

Stefano Gualandi acknowledges the contribution of the National Recovery and Resilience Plan, Mission 4 Component 2 - Investment 1.4 - National Center for HPC, Big Data and Quantum Computing (project code: CN\_00000013) - funded by the European Union - NextGenerationEU.}

\ifpreprint
\section*{Acknowledgements}
\myack
\fi

\ifsiam
\section*{Acknowledgments}
The authors are pleased to acknowledge that the work reported in this paper was substantially performed using the Princeton Research Computing resources at Princeton University which is a consortium of groups led by the Princeton Institute for Computational Science and Engineering (PICSciE) and Research Computing.
\fi

\ifipco
\ifdoubleblind \else
\begin{credits}
\subsubsection{\ackname} 
\myack
\subsubsection{\discintname}
The authors have no competing interests to declare that are
relevant to the content of this article.
\end{credits}
\fi
\fi

\appendix

\section{Full timing results}\label{apx:experiment_times}
In this section, we provide plots of all solve times in seconds for each experiment in Section~\ref{sec:experiments}.
Figure~\ref{fig:lasso_full_times} provides timing results for the experiments in Section~\ref{subsec:full_sparse_coding_experiment}, Figure~\ref{fig:pdhg_times} provides timing results for the experiments in Section~\ref{subsec:flow_experiment}, and Figure~\ref{fig:mpc_times} respectively for the experiments in Section~\ref{subsec:control_experiment}.
\begin{figure}[h]
    \centering
        \includegraphics[width=0.9\textwidth]{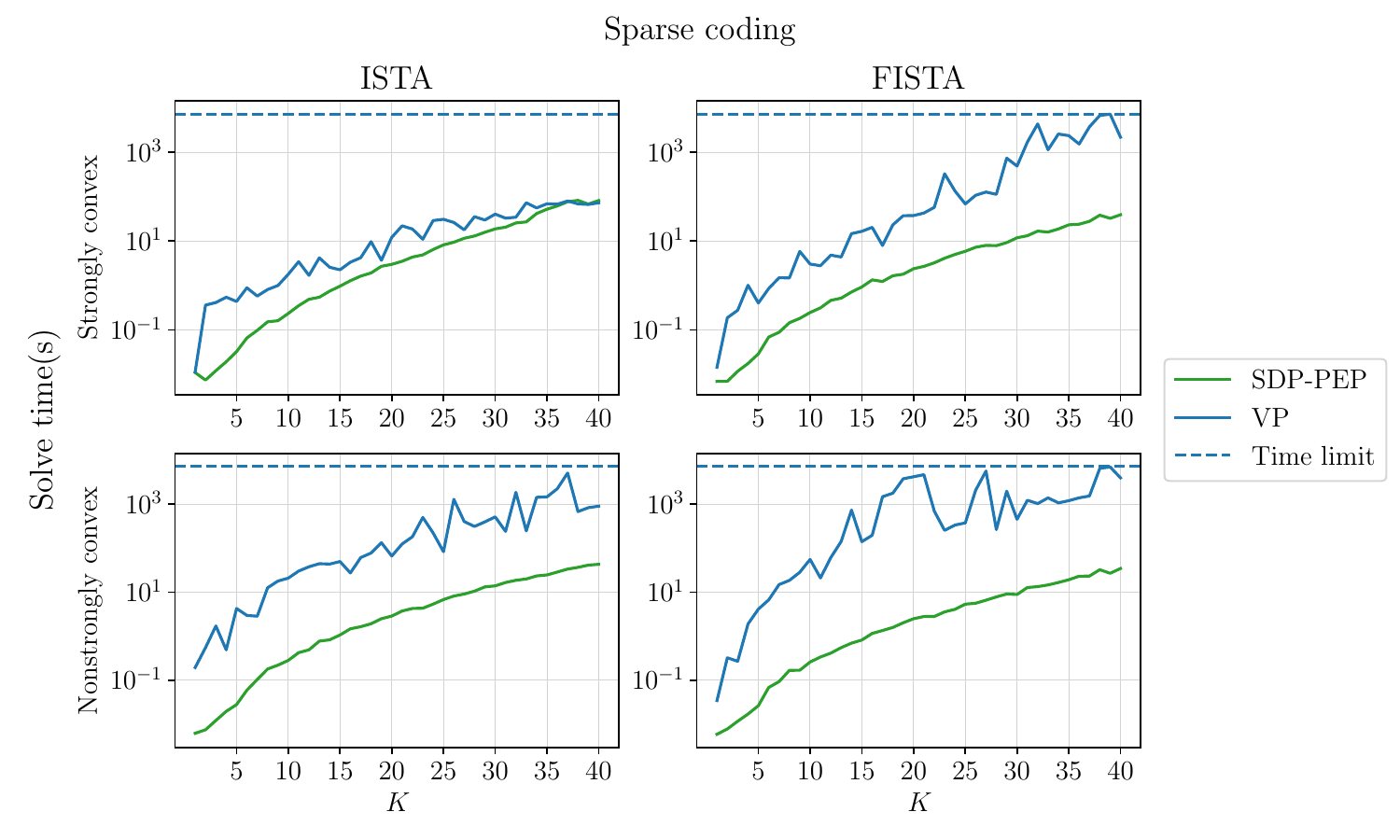}
    \caption{Solve time in seconds for all experiments shown in Figure~\ref{fig:lasso_full_resids}. In both cases, FISTA takes longer to verify than ISTA.}
    \label{fig:lasso_full_times}
\end{figure}
\begin{figure}[h]
    \centering
        \includegraphics[width=0.8\textwidth]{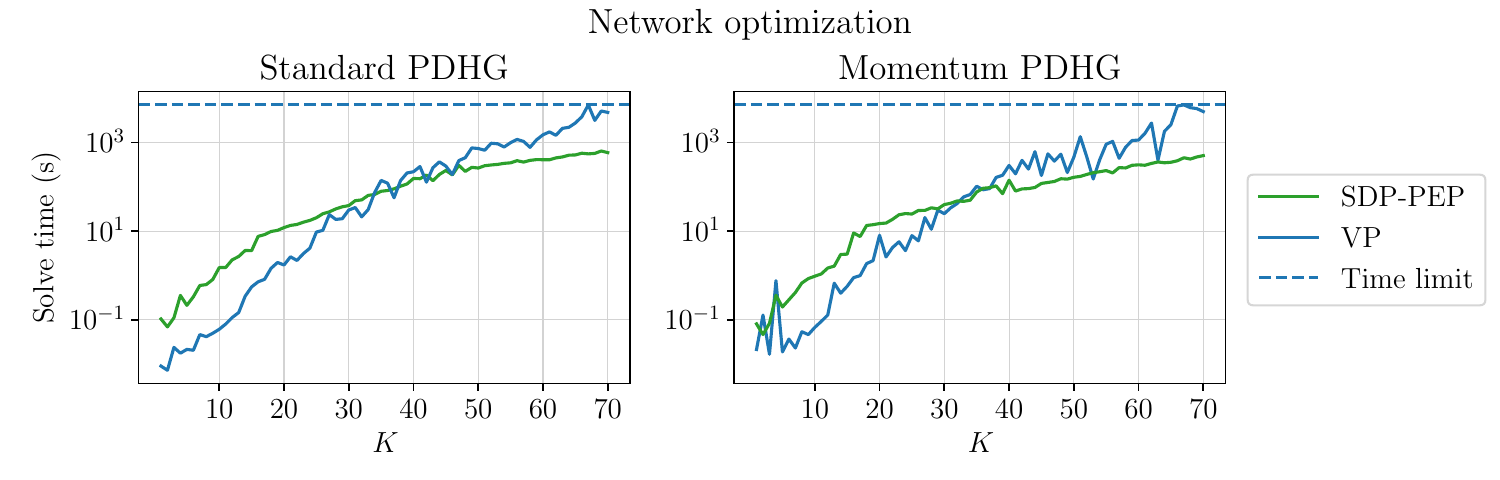}
    \caption{Solve time in seconds for all experiments shown in Figure~\ref{fig:pdhg_resids}. Both versions display similar overall growth in solve time.}
    \label{fig:pdhg_times}
\end{figure}

\begin{figure}[h]
    \centering
        \includegraphics[width=0.8\textwidth]{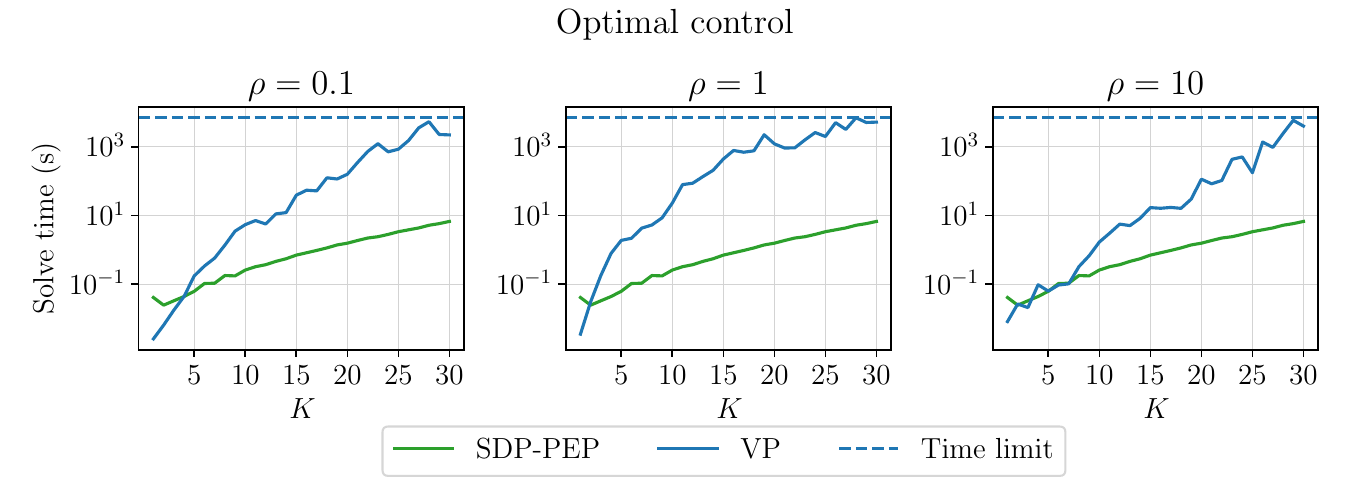}
    \caption{Solve time in seconds for all experiments shown in Figure~\ref{fig:osqp_resids}. All three versions display similar overall growth in solve time.}
    \label{fig:mpc_times}
\end{figure}

\section{Performance metrics of Section~\ref{sec:experiments}}\label{apx:Hmatrix_proofs}
We use the following property of proximal operators.
\begin{lemma}[Proposition 16.44 of \cite{bauschkeConvexAnalysisMonotone2017}]
\label{lem:prox}
    Suppose $g \colon \reals^d \to \reals$ is a closed, convex, and proper function.
    Then, $ x - p = (I - \prox_{g}) (x) \in \partial g (p)$ for any $x \in \dom g$ and $p = \prox_{g} (x)$.
\end{lemma}
\subsection{Sparse coding}
For notational simplicity, let $g(u) = \lambda \|u\|_1$.
Then, the FISTA update~\eqref{eq:fista_iter} with Lemma~\ref{lem:prox} gives
\begin{equation*}
    (I - \eta D^T D) (w^k - u^{k+1})
    \in \eta \partial g(u^{k+1}) + \eta D^T (D u^{k+1} - x).
\end{equation*}
Combining this with~$u^k - u^{k+1} = (u^k - w^k) + (w^k - u^{k+1})$, we get
\begin{align*}
    &(I - \eta D^T D) (u^k - u^{k+1})
    - \frac{\beta_{k-1} - 1}{\beta_k} (I - \eta D^T D) (u^{k-1} - u^{k}) \\
    &\in
        \eta \partial g(u^{k+1})
        + \eta D^T (D u^{k+1} - x).
\end{align*}
The result for ISTA also directly follows from taking $\beta_k \equiv 1$.

\subsection{Network optimization}
From the momentum PDHG update~\eqref{eq:pdhg_momentum},
\begin{align*}
    &z^{k} - z^{k+1} \\
&=
        (I - \Pi_{[0, c]}) \big( z^k - \eta (\mu + A_s^T v^k - A_d^Tw^k) \big)
        + \eta (\mu + A_s^T v^k - A_d^T w^k) \\
    &\in
        \normalcone_{[0, c]} (z^{k+1})
        + \eta (\mu + A_s^T v^k - A_d^T w^k) \\
    &=
        \normalcone_{[0, c]} (z^{k+1})
        + \eta (\mu + A_s^T v^{k+1} - A_d^T w^{k+1})
        + \eta A_s^T (v^k - v^{k+1})
        - \eta A_d^T (w^k - w^{k+1}),
\end{align*}
where $\normalcone_{[0, c]}$ refers to the normal cone of $[0, c]$ and the third inclusion holds from Lemma~\ref{lem:prox}.
Similarly, we get
\begin{align*}
    v^k - v^{k+1}
&\in
        \normalcone_{\ge0} (v^{k+1})
        + \eta \big( b_s - A_s (2\tilde{z}^{k+1} - z^k) \big), \\
    w^k - w^{k+1}
    &= - \eta \big( x - A_d (2 \tilde{z}^{k+1} - z^k) \big),
\end{align*}
where~$\normalcone_{\ge0}$ is the normal cone of the nonnegative orthant.
Using the fact that~$2 \tilde{z}^{k+1} - z^k = z^{k+1} + (1 + 2 \xi_k) (z^{k+1} - z^{k})$, the updates can be written as 
\begin{align*}
    v^k - v^{k+1}
    &\in
        \normalcone_{\ge0} (v^{k+1})
        + \eta \big( b_s - A_s z^{k+1} \big)
        + \eta \left( 1 + 2 \xi_k \right) A_s (z^{k} - z^{k+1}) \\
    w^k - w^{k+1}
    &=
        \eta (A_d z^{k+1} - x)
        - \eta \left(1 + 2 \xi_k \right) A_d (z^k - z^{k+1}).
\end{align*}

\subsection{Optimal control}
With auxiliary variable $y^{k+1} = \rho (v^k - w^{k+1})$, ADMM update~\eqref{eq:admm_fixedpt} can be rewritten as
\begin{align*}
    &w^{k+1} = \Pi_{[l(x), u(x)]}(v^k) \\
    &y^{k+1} = \rho (v^k - w^{k+1}) \\
    &\text{Solve }\,  (P + \sigma I + \rho M^T M)z^{k+1} = \sigma z^k - q(x) + \rho M^T (w^{k+1} - \rho^{-1} y^{k+1}) \\
    &v^{k+1} = v^k + Mz^{k+1} - w^{k+1}.
\end{align*}
Note that the $z^{k+1}$-update is equivalent to
\begin{equation*}
\sigma (z^k - z^{k+1})
        = (P z^{k+1} + M^T y^{k+1} + q(x)) + \rho M^T (M z^{k+1} - w^{k+1}).
\end{equation*}
Therefore,
\begin{align*}
    \begin{bmatrix}
        z^{k+1} \\ v^{k+1}
    \end{bmatrix}
    - 
    \begin{bmatrix}
        z^k \\ v^k
    \end{bmatrix}
    &=
    \begin{bmatrix}
        - \sigma^{-1} (P z^{k+1} + M^T y^{k+1} + q(x))
            - \sigma^{-1} \rho M^T (M z^{k+1} - w^{k+1}) \\
        (M z^{k+1} - w^{k+1})
    \end{bmatrix} \\
    &=
    - \sigma^{-1} \begin{bmatrix}
        I & \rho M^T \\
        0 & - \sigma I
    \end{bmatrix}
    \begin{bmatrix}
        P z^{k+1} + M^T y^{k+1} + q(x) \\
        M z^{k+1} - w^{k+1}
    \end{bmatrix},
\end{align*}
which is equivalent to
\begin{equation*}
    \begin{bmatrix}
        P z^{k+1} + M^T y^{k+1} + q(x) \\
        M z^{k+1} - w^{k+1}
    \end{bmatrix}
    =
    \begin{bmatrix}
        - \sigma I & - \rho M^T \\
        0 & I
    \end{bmatrix}
    \left(
        \begin{bmatrix}
            z^{k+1} \\ v^{k+1}
        \end{bmatrix}
        - 
        \begin{bmatrix}
            z^k \\ v^k
        \end{bmatrix}
    \right).
\end{equation*}

\section{Operator theory based improvements}\label{apx:op_theory_improvements}
Figures~\ref{fig:lasso_theory_improvements} and~\ref{fig:mpc_theory_improvements} show the fraction of iterate indices where the operator theory based bound provides a tighter bound than interval propagation for their respective experiments.
Note that for the network optimization experiments, the operator theory bounds were never tighter than the interval propagation bounds.
\begin{figure}[t]
    \centering
        \includegraphics[width=0.8\textwidth]{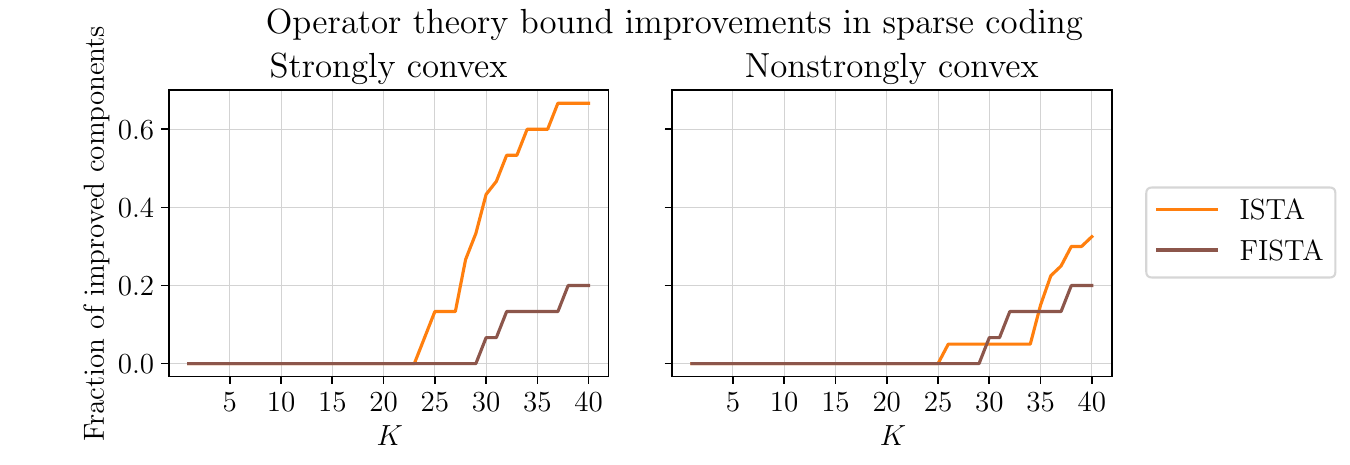}
    \caption{Fraction of iterate components in which the operator theory provides a tighter bound than interval propagation in the strongly convex (left) and nonstrongly convex (right) cases. The fractions correspond to the experiments in Figure~\ref{fig:lasso_full_resids}.}
    \label{fig:lasso_theory_improvements}
\end{figure}
\begin{figure}[t]
    \centering
        \includegraphics[width=0.7\textwidth]{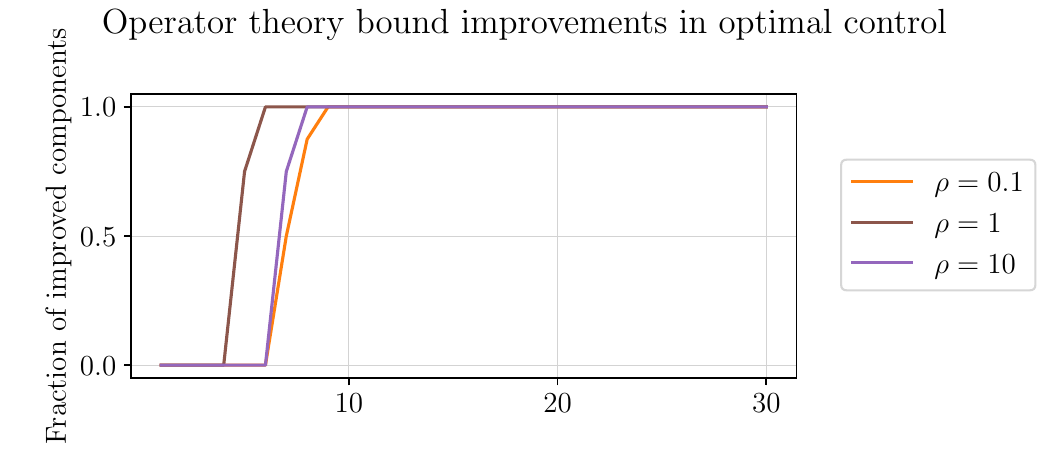}
    \caption{Fraction of iterate components in which the operator theory provides a tighter bound than interval propagation across $\rho$ values. The fractions correspond to the experiments in Figure~\ref{fig:osqp_resids}. The operator theory bounds quickly exploit linear convergence of ADMM.}
    \label{fig:mpc_theory_improvements}
\end{figure}

\end{document}